\newtheorem{theorem}{Theorem}[section]
\newtheorem{cor}[theorem]{Corollary}
\newtheorem{lemma}[theorem]{Lemma}
\theoremstyle{definition}
\theoremstyle{remark}
\numberwithin{equation}{section}
\let\originalleft\left
\let\originalright\right
\renewcommand{\left}{\mathopen{}\mathclose\bgroup\originalleft}
\renewcommand{\right}{\aftergroup\egroup\originalright}
\begin{document}
\title{Strong $q$-analogues for values of the Dirichlet beta function}
\author{Ankush Goswami, Timothy Huber}
\address{School of Mathematical and Statistical Sciences, the University of Texas Rio Grande Valley, 1201 W. University Dr., Edinburg, TX, 78539}
\email{ankush.goswami@utrgv.edu, ankushgoswami3@gmail.com}
\email{timothy.huber@utrgv.edu}

\maketitle
\begin{abstract}
An infinite class of relations between modular forms is constructed that generalizes evaluations of the Dirichlet beta function at odd positive integers. The work is motivated by a base case appearing in Ramanujan's Notebooks and a parallel construction for the Riemann zeta function. The identities are shown to be strong $q$-analogues by virtue of their reduction to the classical beta evaluations as $q\to 1^{-}$ and explicit evaluations at CM points for $|q|<1$. Inequalities of Deligne determine asymptotic formulas for the Fourier coefficients of the associated modular forms.  
\end{abstract}

\section{Introduction}
\noindent For $|q|<1$, define $(a;q)_{\infty} = \prod_{n=0}^{\infty} (1-aq^{n})$. The two identities \cite{Sun} 
\begin{align} \label{sun}
    \sum_{k=0}^{\infty} \frac{q^k(1+q^{2k+1})}{(1-q^{2k+1})^{2}} = \frac{(q^{2}; q^{2})_{\infty}^{4}}{(q;q^{2})_{\infty}^{4}}, 
    \qquad \sum_{k=0}^\infty\dfrac{q^k(1+q^{2k+1})}{(1-q^{2k+1})^2}=\dfrac{(q^2;q^2)_\infty^4}{(q;q^2)_\infty^4}.
\end{align}
generalize Euler's evaluations in the cases $k=1,2$ for 
\begin{equation}\label{zeta}
 \zeta(2k)=\dfrac{(-1)^{k+1}B_{2k}2^{2k}}{2(2k)!}\pi^{2k}, \qquad \frac{z}{e^{z} -1} := \sum_{n=0}^{\infty} \frac{B_{n} z^{n}}{n!},
\end{equation}
where $\zeta(s) =\sum_{n\geq 1} 1/n^{s}$, $\operatorname{Re}(s)>1$ is the Riemann zeta function. 
After multiplication by a  finite factor, the identities \eqref{sun}, respectively, tend to evaluations of $\zeta(2)$ and $\zeta(4)$ as $q\to 1$. The first author \cite{AG0} used the theory of modular forms to determine explicit $q$-expansions generalizing every case of \eqref{zeta}.
Ono and Dawsey \cite{DO} employed the theory of complex multiplication (CM) to obtain further specializations of Goswami and Sun's series for $|q|<1$. Their evaluation at other points in the unit circle were given in terms of algebraic multiples of $\Gamma$-values at CM points. 

The Riemann zeta function is closely aligned with the Dirichlet beta function, defined by
\begin{equation}\label{betadef}
\beta(s)=\sum_{n\geq 1}\dfrac{\chi_{-4}(n)}{n^s}=\sum_{n\geq 0}\dfrac{(-1)^n}{(2n+1)^s}, \qquad \operatorname{Re}(s)>0, \quad \chi_{-4}=\left(\frac{-4}{\cdot}\right).
\end{equation}
Both $\zeta(s)$ and $\beta(s)$ have meromorphic and analytic continuations, respectively, to the complex plane, and their Euler products imply that the Riemann hypothesis for each is equivalent. In \cite{KRZ, Z}, arithmetic and diophantine properties of $q$-analogues of values of $\zeta(s)$ were studied.
This work formulates identities between modular forms that generalize odd evaluations of $\beta(s)$
\begin{equation}\label{betagenvalues}
\beta(2k+1)=\dfrac{(-1)^k E_{2k}}{4^{k+1}(2k)!}\pi^{2k+1}, \qquad \frac{2}{e^{z}+e^{-z}} := \sum_{n=0}^{\infty} \frac{E_{n} z^{n}}{n!}. \end{equation}
We show the resulting infinite class of of identities may also be evaluated at CM points to obtain additional computational information involving $\Gamma$-values. 
The first three generalizations of \eqref{betagenvalues} are
\begin{equation}\label{qanarama}
\sum_{n=0}^{\infty}(-1)^n\dfrac{q^n}{1-q^{2n+1}}=
 \dfrac{(q^4;q^4)_\infty^2}{(q^2;q^4)_\infty^2}, \qquad \sum_{n=0}^{\infty} (-1)^n\dfrac{q^{2n}(1+q^{2n+1})}{(1-q^{2n+1})^3}
=\dfrac{(q^2;q^4)_\infty^2(q^4;q^4)_\infty^6}{(q;q^2)_\infty^4},
\end{equation}
\begin{align} 
 \nonumber   \sum_{n=1}^{\infty} &\frac{\chi_{-4}(n)q^{2n}\left(1+57q^{2n}+302q^{4n}+302q^{6n}+57q^{8n}+q^{10n}\right)}{\left(1-q^{2n}\right)^{7}} +\frac{17q^{4}\left(q^{8};q^{8}\right)_{\infty}^{4}\left(q^{4};q^{4}\right)_{\infty}^{22}}{\left(q^{2};q^{2}\right)_{\infty}^{12}}\\ &=61\cdot\frac{q^{4}\left(q^{4};q^{4}\right)_{\infty}^{22}\left(q^{8};q^{8}\right)_{\infty}^{4}}{\left(q^{2};q^{2}\right)_{\infty}^{12}}+\frac{q^{2}\left(q^{4};q^{4}\right)_{\infty}^{46}}{\left(q^{2};q^{2}\right)_{\infty}^{20}\left(q^{8};q^{8}\right)_{\infty}^{12}} +16\cdot\frac{q^{6}\left(q^{8};q^{8}\right)_{\infty}^{20}}{\left(q^{2};q^{2}\right)_{\infty}^{4}\left(q^{4};q^{4}\right)_{\infty}^{2}}. \label{k=3}
\end{align}
Identities \eqref{qanarama} are due to Ramanujan \cite[Example (iv), p. 139]{B1} and Hou-Sun \cite{HS}, respectively, and reduce to $\beta(1)=\pi/4$ and $\beta(3)=\pi^3/32$. A different identity generalizing $\beta(1)=\pi/4$ was obtained in \cite{HKS}. Equation \eqref{k=3} corresponds to $\beta(5)=5 \pi ^5/1536$ and is representative of the class identities derived in the present work.

The infinite class of identities considered here generalizing \eqref{betagenvalues} permit much more than a specialization of the L-function since they are valid in a larger domain. Because the generalizations involve a parameter $q$ inside the unit disk, evaluations of the series in terms of a variety of $\Gamma$-values may be obtained by restricting the series to appropriate values. 
Ono and Dawsey term such identities \textit{strong q-analogues} of the beta evaluations because one obtains evaluations corresponding to CM points for $|q|<1$ as well as limiting values as $q\to 1^{-}$. Such $q$-analogues appear prominently in mathematical physics, generalizing classical formulas through the parameter $q$ and recovering the original formulas under the limit $q\to 1^{-}$ \cite{PhysRevA.94.033808}. 

The $q$-expansions derived here are constructed by writing relevant Lambert series in terms of Eisenstein series and cuspidal quotients of the Dedekind eta function $\eta(\tau)=q^{1/24}(q;q)_\infty$, $q = e^{2 \pi i\tau}$. 
We will assume the reader is familiar with fundamental aspects of the theory of modular functions and modular forms appearing in \cite{DS,Koblitz,Ono}. To state our main results, define $$\Gamma_0(N):=\left\{\begin{pmatrix} a&b\\c&d
    \end{pmatrix}\in SL_2(\mathbb{Z}): c\equiv 0 \pmod{N}\right\}.$$ Denote by $M_k(N,\chi)$ and $S_k(N,\chi)$ the $\mathbb{C}$-vector spaces of modular and cusp forms, respectively, of weight $k$ over $\Gamma_0(N)$ and denote the space of Eisenstein series by $E_k(N,\chi)$. The theory of eta quotients \cite{Ono} (cf. Theorem \ref{GHN}) may be used to show that 
\begin{equation}
f_k(\tau):=\dfrac{\eta(4\tau)^{8k-2}\eta(8\tau)^4}{\eta(2\tau)^{4k}}\in M_{2k+1}(8,\chi_{-4}),\quad k\in\mathbb{Z}_{\geq 0},    
\end{equation}
where $\chi_{-4}$ is defined as in \eqref{betadef}. We designate Eisenstein series defined by Dirichlet characters $\chi$ and $\psi$ of modulus $L$ and $R$, respectively, by
\begin{equation*}
M_{k,\chi,\psi}(\tau):=\sideset{}{'}\sum_{m,n=-\infty}^\infty\dfrac{\chi(m)\psi(n)}{(m\tau+n)^k},\quad \tau\in\mathbb{H},  
\end{equation*}
where $\Sigma'$ denotes summation over all pairs of integers except $(0,0)$. Let $\psi^{0}$ be the primitive character associated to $\psi$, and let $W(\psi^0)$ be the Gauss sum for the character $\psi^0$, defined by 
\begin{equation} \label{akdef}
W(\psi^0)=\displaystyle\sum_{a=0}^{r_\psi-1}\psi^0(a)e^{2\pi i a/M}\quad \text{and}\quad \mathcal{A}_k=\dfrac{2(-2\pi i)^{k}W(\psi^0)}{R^k(k-1)!},
\end{equation}
where $r_\psi$ is the conductor of $\psi$.
We will use the Fourier series expansion of $M_{k,\chi,\psi}(\tau)$ \cite{Miyake}, in the case that $\chi(-1)\psi(-1)=(-1)^k$ given by
\begin{equation} \label{meis}
M_{k,\chi,\psi}(\tau)=c_k+\mathcal{A}_k\sum_{n\geq 1}a(n)q^{n/R}, \quad \text{where} \quad c_k=\begin{cases}
2L(k,\psi),&\chi:\;\text{principal},\\
0,&\text{otherwise},
\end{cases}\end{equation}
and 
\begin{equation*}
a(n)=\sum_{d\mid n}\chi(n/d)d^{k-1}\sum_{c\mid (\ell,d)}c\mu(\ell/c)\psi^0(\ell/c)\overline{\psi^0}(d/c),   
\end{equation*}
and where $\ell=R/r_\psi$, and $\mu$ denotes the M\"obius function.
We will employ a normalized version of the Fourier expansion
\begin{equation}\label{Eisensteinseries}
\mathscr{E}_{k,\chi,\psi}(\tau)=\dfrac{c_k}{\mathcal{A}_k}+\sum_{n\geq 1}a(n)q^{n/R}.    
\end{equation}

With the above notation, our main results may be stated in the next theorems. 
\begin{theorem}\label{main1}
Define $E_{k}$ by \eqref{betagenvalues}. For $k\geq 1$, we have
\begin{align*}
f_k(\tau)-H_k(\tau)\in S_{2k+1}(8,\chi_{-4}),    
\end{align*}
where 
\begin{align*}
H_k(\tau):=\begin{cases}
\dfrac{1}{2^{2k}E_{2k}}\cdot \left(2^{2k}\cdot \mathscr{E}_{2k+1,\chi_{-4},1}(2\tau)-\mathscr{E}_{2k+1,\chi_{-4},\chi_2}(2\tau)\right),&k\equiv 0\pmod{2},\\
-\dfrac{1}{E_{2k}}\cdot \mathscr{E}_{2k+1,\chi_{-4},1}(2\tau),&k\equiv 1\pmod{2}.
\end{cases}    
\end{align*}
\end{theorem}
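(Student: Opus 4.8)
The plan is to show that the difference $f_k(\tau) - H_k(\tau)$ is a holomorphic modular form of weight $2k+1$ on $\Gamma_0(8)$ with character $\chi_{-4}$ that vanishes at every cusp. First I would establish that $H_k(\tau)$ genuinely lies in $M_{2k+1}(8,\chi_{-4})$: the Eisenstein series $\mathscr{E}_{2k+1,\chi_{-4},\psi}(\tau)$ for $\psi \in \{1,\chi_2\}$ are, after the substitution $\tau \mapsto 2\tau$, elements of $E_{2k+1}(N,\chi_{-4})$ for a suitable level $N \mid 8$ (this is the standard fact that $M_{k,\chi,\psi}(\tau)$ has weight $k$ and nebentypus $\chi\psi$ on $\Gamma_0(L R)$, and that scaling $\tau \mapsto 2\tau$ preserves modularity while at worst multiplying the level by $2$). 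One checks $\chi_{-4}(-1)\psi(-1) = (-1)^{2k+1} = -1$ in both cases, so the Fourier expansion \eqref{meis} is the applicable one and $c_{2k+1} = 2L(2k+1,\psi)$ when $\psi$ is principal. Since $f_k \in M_{2k+1}(8,\chi_{-4})$ by the cited eta-quotient theorem (Theorem \ref{GHN}), the difference lies in the same space, and it remains only to show it is cuspidal.

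The key point is the constant-term/principal-part bookkeeping at each of the cusps of $\Gamma_0(8)$. The eta quotient $f_k(\tau) = \eta(4\tau)^{8k-2}\eta(8\tau)^4/\eta(2\tau)^{4k}$ has a computable order of vanishing at every cusp $a/c$ via the standard formula $\mathrm{ord}_{a/c}$ in terms of $\gcd(c,4)^2$, $\gcd(c,8)^2$, $\gcd(c,2)^2$; one verifies it is holomorphic everywhere and determines at which cusps it fails to vanish. At those cusps I would match it against the constant term of $H_k$. The coefficient $1/(2^{2k}E_{2k})$ in the case $k$ even, and $-1/E_{2k}$ in the case $k$ odd, are precisely engineered so that the non-cuspidal part of $f_k$ is cancelled: the Euler-number normalization enters through the classical evaluation $\beta(2k+1) = (-1)^k E_{2k}\pi^{2k+1}/(4^{k+1}(2k)!)$ and the interpretation of $L(2k+1,\chi_{-4}) = \beta(2k+1)$ as the constant term $c_{2k+1}/\mathcal{A}_{2k+1}$-type quantity appearing in $\mathscr{E}_{2k+1,\chi_{-4},1}$. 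The combination $2^{2k}\mathscr{E}_{2k+1,\chi_{-4},1}(2\tau) - \mathscr{E}_{2k+1,\chi_{-4},\chi_2}(2\tau)$ in the even case is chosen so that the imprimitive-character correction at the cusps coming from $\chi_2$ is accounted for, which is why no such term is needed when $k$ is odd (there the relevant space of Eisenstein series is one-dimensional after imposing the character and level constraints).

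Concretely, the steps in order: (1) confirm $f_k \in M_{2k+1}(8,\chi_{-4})$ and compute its cusp orders; (2) show each $\mathscr{E}_{2k+1,\chi_{-4},\psi}(2\tau)$ lies in $E_{2k+1}(8,\chi_{-4})$ and compute its constant term at every cusp of $\Gamma_0(8)$, using the transformation law for $M_{k,\chi,\psi}$ under $SL_2(\mathbb{Z})$ together with the Fourier expansion \eqref{meis}--\eqref{Eisensteinseries}; (3) dimension-count $E_{2k+1}(8,\chi_{-4})$ and identify which linear combinations of the two (or one) Eisenstein series have the same constant terms at all cusps as $f_k$; (4) verify that $H_k$ as defined is exactly that combination, which forces $f_k - H_k$ to have vanishing constant term at every cusp, hence to be a cusp form. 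The main obstacle will be step (2)--(3): carefully tracking the cusp expansions of the scaled Eisenstein series $\mathscr{E}_{2k+1,\chi_{-4},\psi}(2\tau)$ at all cusps of $\Gamma_0(8)$ — in particular handling the non-principal, imprimitive character $\chi_2$ and getting the Gauss-sum and $\mathcal{A}_k$ normalization factors from \eqref{akdef} to combine correctly with the Euler-number constant — so that the vanishing is simultaneous at every cusp and not merely at $\infty$. The reduction to the classical $\beta$-values as $q \to 1^-$ is then a separate (and comparatively routine) consequence of letting the nome tend to $1$ and reading off the constant terms, but the cuspidality is where the real content lies.
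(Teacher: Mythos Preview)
Your plan is correct and matches the paper's approach almost exactly: verify $f_k,H_k\in M_{2k+1}(8,\chi_{-4})$, compute both at the four cusps $\{0,\tfrac12,\tfrac14,i\infty\}$ of $\Gamma_0(8)$ (the paper does this for $f_k$ via the $\eta$-transformations \eqref{dedtrans} and for the Eisenstein series via Koblitz's level-$N$ series \eqref{kobeis}--\eqref{transKob} together with Lemma~\ref{MiyKob}), and check that the values agree. One small correction: your heuristic that for odd $k$ ``the relevant space of Eisenstein series is one-dimensional'' is not the actual mechanism---the Eisenstein subspace of $M_{2k+1}(8,\chi_{-4})$ has the same dimension regardless of the parity of $k$; the even/odd dichotomy in the definition of $H_k$ emerges purely from how the cusp values of $f_k$ and of the two Eisenstein series line up, which the explicit computation reveals.
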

\begin{theorem}\label{main2}
Let $f_{k}$ be defined as in Theorem \ref{main1}. Denote the Eulerian polynomials $A_{k}$ by
\begin{align*}
    \sum_{k=0}^{\infty} A_{k}(x)\frac{z^{n}}{n!} := \frac{x-1}{x-e^{(x-1)z}}.
\end{align*}
Then, for $k\geq 1$, 
\begin{align}\label{qana}
f_k(\tau)=\begin{cases}
\dfrac{1}{2^{2k}\cdot E_{2k}}\displaystyle\sum_{n\geq 1}\chi_{-4}(n)\dfrac{q^{n}P_{k}(q^{n})}{(1-q^{2n})^{2k+1}},&k\equiv 0\pmod{2},\\
-\dfrac{1}{E_{2k}}\displaystyle\sum_{n\geq 1}\chi_{-4}(n)\dfrac{q^{2n} A_{2k}(q^{2n})}{(1-q^{2n})^{2k+1}},&k\equiv 1\pmod{2},    
\end{cases}+T_{2k+1}(\tau)    
\end{align}
where $T_{2k+1}(\tau)\in S_{2k+1}(8,\chi_{-4})$ and $P_{k}(t)\in\mathbb{Z}[t]$ is of degree $4k$, and defined by 
\begin{equation*}
P_k(t):=(1+t)^{2k+1}\cdot A_{2k}(t)-2^{2k}\cdot t\cdot A_{2k}(t^2).  
\end{equation*}
If both sides of \eqref{qana} are multiplied by $(1-q)^{2k+1}$ and $q\to 1^{-}$, then \eqref{qana} specializes to \eqref{betagenvalues}. 
\end{theorem}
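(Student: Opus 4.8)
\emph{The plan} is to establish \eqref{qana} using Theorem \ref{main1} and the classical Lambert-series evaluation of power sums via Eulerian polynomials, and then to extract \eqref{betagenvalues} by letting $q\to1^{-}$ in \eqref{qana}.

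\emph{Deriving \eqref{qana}.} I would first record the identity $\sum_{d\geq1}d^{m}x^{d}=x\,A_{m}(x)/(1-x)^{m+1}$, which follows from the generating function defining $A_{m}$ after the substitution $z\mapsto z/(1-x)$ and comparison with $\sum_{d\geq1}e^{dz}x^{d}=xe^{z}/(1-xe^{z})$. Because $\chi_{-4}$ is non-principal the constant term $c_{k}$ in \eqref{meis} vanishes, and for $\psi$ trivial (so $R=1$) the coefficient formula of \eqref{meis}--\eqref{Eisensteinseries} reduces to $a(n)=\sum_{d\mid n}\chi_{-4}(n/d)d^{2k}$; regrouping and applying the Eulerian identity then gives
\[
\mathscr{E}_{2k+1,\chi_{-4},1}(\tau)=\sum_{m\geq1}\chi_{-4}(m)\sum_{d\geq1}d^{2k}q^{md}=\sum_{m\geq1}\chi_{-4}(m)\,\frac{q^{m}A_{2k}(q^{m})}{(1-q^{m})^{2k+1}},
\]
and likewise with $q\mapsto q^{2}$ for $\mathscr{E}_{2k+1,\chi_{-4},1}(2\tau)$. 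For $k$ odd this already matches $H_{k}$ from Theorem \ref{main1} with the bracketed series of the second branch of \eqref{qana}, so $T_{2k+1}:=f_{k}-H_{k}$ lies in $S_{2k+1}(8,\chi_{-4})$ by Theorem \ref{main1}. For $k$ even, clearing denominators via $(1-q^{2n})^{2k+1}=(1-q^{n})^{2k+1}(1+q^{n})^{2k+1}$ shows that the definition of $P_{k}$ is precisely the partial-fraction identity
\[
\frac{q^{n}P_{k}(q^{n})}{(1-q^{2n})^{2k+1}}=\frac{q^{n}A_{2k}(q^{n})}{(1-q^{n})^{2k+1}}-2^{2k}\,\frac{q^{2n}A_{2k}(q^{2n})}{(1-q^{2n})^{2k+1}},
\]
so the bracketed series of the first branch equals $\mathscr{E}_{2k+1,\chi_{-4},1}(\tau)-2^{2k}\mathscr{E}_{2k+1,\chi_{-4},1}(2\tau)$; it then remains to verify the relation $\mathscr{E}_{2k+1,\chi_{-4},\chi_{2}}(2\tau)=2^{2k+1}\mathscr{E}_{2k+1,\chi_{-4},1}(2\tau)-\mathscr{E}_{2k+1,\chi_{-4},1}(\tau)$ by comparing Fourier coefficients through \eqref{meis} --- a finite divisor-sum check, carried out over the residues $n$ odd, $n\equiv2\pmod{4}$, $n\equiv0\pmod{4}$, using that $\chi_{-4}$ annihilates even arguments --- after which substitution into $H_{k}$ from Theorem \ref{main1} produces the first branch. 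The assertions $P_{k}\in\mathbb{Z}[t]$ and $\deg P_{k}=4k$ are immediate from the definition of $P_{k}$, since $A_{2k}$ has non-negative integer coefficients and degree $2k-1$, whence the first term has degree $4k$ and leading coefficient $1$ and the second has degree $4k-1$.

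\emph{The specialization.} Setting $q=e^{-2\pi s}$ with $s\to0^{+}$, so that $\tau=is\to0$ and $1-q\sim2\pi s$, I would treat the two sides of \eqref{qana} separately. On the left, $\eta(-1/\tau)=\sqrt{-i\tau}\,\eta(\tau)$ gives $\eta(Nis)\sim(Ns)^{-1/2}e^{-\pi/(12Ns)}$; substituting into $f_{k}=\eta(4\tau)^{8k-2}\eta(8\tau)^{4}/\eta(2\tau)^{4k}$, the exponential factors cancel identically and the powers of $s$ collapse to $f_{k}(is)\sim2^{-6k-4}s^{-(2k+1)}$, so $(1-q)^{2k+1}f_{k}(\tau)\to(2\pi)^{2k+1}2^{-6k-4}=\pi^{2k+1}/2^{4k+3}$; the cusp form is killed in the limit because $T_{2k+1}(is)=O\bigl(s^{-(2k+1)}e^{-c/s}\bigr)$ for some $c>0$ by the Fricke involution on $\Gamma_{0}(8)$. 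On the right, expanding $1/(1+x^{2})=\sum_{j\geq0}(-1)^{j}x^{2j}$ recasts the relevant series as $\sum_{d\geq1}d^{2k}q^{d}/(1+q^{2d})$ and $\sum_{d\geq1}d^{2k}q^{2d}/(1+q^{4d})$; a Riemann-sum computation gives
\[
(1-q)^{2k+1}\sum_{d\geq1}\frac{d^{2k}q^{d}}{1+q^{2d}}\longrightarrow\int_{0}^{\infty}\frac{u^{2k}}{e^{u}+e^{-u}}\,du=(2k)!\,\beta(2k+1),
\]
while the $q^{2d}$-series contributes $2^{-(2k+1)}(2k)!\,\beta(2k+1)$. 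Carrying the prefactors of \eqref{qana} through (and using $E_{2k}=(-1)^{k}|E_{2k}|$) shows that in both parities $(1-q)^{2k+1}\times(\text{right side of }\eqref{qana})\to(2k)!\,\beta(2k+1)/(2^{2k+1}|E_{2k}|)$. Equating the two limits gives $\pi^{2k+1}/2^{4k+3}=(2k)!\,\beta(2k+1)/(2^{2k+1}|E_{2k}|)$, i.e. $\beta(2k+1)=(-1)^{k}E_{2k}\pi^{2k+1}/(4^{k+1}(2k)!)$, which is \eqref{betagenvalues}.

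The step I expect to be the main obstacle is making the right-hand limit rigorous: termwise passage to the limit in the Lambert series is not a priori valid and should be justified either by dominated convergence --- dominating the ``head'' $q^{m}\geq\tfrac12$ by the $m$-summable majorant $2^{2k+1}(2k)!/m^{2k+1}$ (via $1+q+\cdots+q^{m-1}\geq mq^{m-1}\geq m/2$) and the ``tail'' $q^{m}<\tfrac12$ by a geometric bound that is $o(1)$ --- or by a Hardy--Littlewood Tauberian theorem applied to the summatory function $\sum_{n\leq X}\sum_{d\mid n}\chi_{-4}(n/d)d^{2k}\sim\tfrac{\beta(2k+1)}{2k+1}X^{2k+1}$. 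A secondary point needing care is the cusp expansion of $f_{k}$ at $0$ --- confirming that the $\eta$-exponentials cancel exactly and that the constant is precisely $2^{-6k-4}$ --- since the final identification with \eqref{betagenvalues} rests on it.
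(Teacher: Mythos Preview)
Your derivation of \eqref{qana} matches the paper's proof essentially step for step: both expand $\mathscr{E}_{2k+1,\chi_{-4},1}$ as a Lambert series, invoke the Eulerian-polynomial identity $\sum_{d\geq1}d^{m}t^{d}=tA_{m}(t)/(1-t)^{m+1}$, and use the relation $\mathscr{E}_{2k+1,\chi_{-4},\chi_{2}}(2\tau)=2^{2k+1}\mathscr{E}_{2k+1,\chi_{-4},1}(2\tau)-\mathscr{E}_{2k+1,\chi_{-4},1}(\tau)$ (the paper records this as \eqref{Eis2} without proof; your Fourier-coefficient check is a fine justification).

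Where you diverge is in the specialization $q\to1^{-}$. On the eta side, the paper does not use the modular transformation of $\eta$; instead it rewrites Wallis' product as $\lim_{q\to1^{-}}(1-q)(q^{2};q^{2})_{\infty}^{4}/(q;q)_{\infty}^{2}=\pi/2$ and factors $f_{k}$ into $2k+1$ copies of this pattern to get $\pi^{2k+1}/2^{4k+3}$ directly. Your route via $\eta(Nis)\sim(Ns)^{-1/2}e^{-\pi/(12Ns)}$ is equally valid and has the advantage that the exponential cancellation is automatic from the modularity of $f_{k}$, whereas the Wallis approach is more elementary but requires spotting the factorization. On the Lambert side, the paper does not recast the series through $1/(1+x^{2})$ and an integral; it simply applies L'H\^opital termwise, using $A_{2k}(1)=(2k)!$ and $P_{k}(1)=2^{2k}(2k)!$ so that each summand contributes $\chi_{-4}(n)(2k)!/(2n)^{2k+1}$, and then sums to $\beta(2k+1)$. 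This is shorter than your Riemann-sum computation, though the paper (like most treatments) does not justify the interchange of limit and sum---your dominated-convergence outline is therefore a genuine improvement in rigor, and your bound $T_{2k+1}(is)=O(s^{-(2k+1)}e^{-c/s})$ for the cusp form is more careful than the paper's bare assertion that $T_{2k+1}\to0$.
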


Theorem \ref{main1} provides a canonical decomposition for the eta quotient $f_{k}$ into Eisenstein series and eta quotients that are cusp forms. Theorem \ref{main2} reformulates the expansions of Theorem \ref{main1} as $q$-analogues of the evaluation of the beta function at odd positive values. The cusp component $T_{k}(\tau)$ in Theorem \ref{main2} may be calculated explicitly in terms of the eta quotients generating a basis
\begin{equation*}
F(\tau):=\dfrac{\eta(4\tau)^8}{\eta(2\tau)^4},\quad \theta(\tau):=\sum_{n=-\infty}^\infty q^{n^2}=\dfrac{\eta(2\tau)^5}{\eta(\tau)^2\eta(4\tau)^2},\quad F_2(\tau)=F(2\tau),\quad \theta_2(\tau)=\theta(2\tau).    
\end{equation*}
\begin{theorem}\label{main3}
The set
\begin{equation*}
\left\{F^{\ell}\theta_2^{4(k-\ell)+2}: 0\leq \ell\leq k-1\right\}\bigcup\left\{F^{2k-\ell}F_2^{\ell-k}\theta_2^{2}: k\leq \ell\leq 2k\right\}\bigcup\left\{\dfrac{FF_2^{k}}{\theta_2^2}\right\}    
\end{equation*}
forms a basis for $M_{2k+1}(8,\chi_{-4})$. That is, every $g\in M_{2k+1}(8,\chi_{-4})$ can be uniquely written as
\begin{align*}
g(\tau)=\sum_{\ell=0}^{k-1}\alpha_k(\ell)\cdot F(\tau)^\ell\theta(2\tau)^{4(k-\ell)+2}+\sum_{\ell=k}^{2k}\beta_k(\ell)\cdot F(\tau)^{2k-\ell}F(2\tau)^{\ell-k}\theta(2\tau)^2+\gamma_k\cdot \dfrac{F(\tau)F(2\tau)^k}{\theta(2\tau)^2},  
\end{align*}
where $\alpha_k(r), \beta_k(s)\in\mathbb{C}$ with $0\leq r\leq k-1,\;k\leq s\leq 2k$. Furthermore, if $g(\tau)$ is a cusp form, then the coefficients in the linear combination above satisfy
\begin{align*}
    &(1) \quad \alpha_k(0)=0 = \gamma_{k}, \qquad (2) \quad \displaystyle\sum_{\ell=0}^{k-1}\dfrac{\alpha_k(\ell)}{4^\ell}+\sum_{\ell=k}^{2k}\dfrac{\beta_k(\ell)}{4^\ell}+\dfrac{\gamma_k}{2^{4k+3}}=0, \\ 
    &(3) \quad \displaystyle\sum_{\ell=0}^{k-1}(-1)^\ell\dfrac{\alpha_k(\ell)}{4^{\ell}}+\sum_{\ell=k}^{2k}(-1)^\ell\dfrac{\beta_k(\ell)}{4^\ell}-\dfrac{\gamma_k}{2^{4k+2}}=0.
\end{align*}

\end{theorem}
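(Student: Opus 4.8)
The plan is four-fold: (i) show each element of the displayed set lies in $M_{2k+1}(8,\chi_{-4})$; (ii) show the set has exactly $\dim_{\mathbb C}M_{2k+1}(8,\chi_{-4})$ elements; (iii) deduce linear independence, hence the basis property and the uniqueness of the stated expansion, from the $q$-expansions at $\infty$; and (iv) translate the cuspidality of $g$ into vanishing at the four cusps of $\Gamma_0(8)$ and read off $(1)$--$(3)$.

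For (i), write $F=\eta(4\tau)^8/\eta(2\tau)^4$, $F_2=\eta(8\tau)^8/\eta(4\tau)^4$ and $\theta_2=\eta(4\tau)^5/\big(\eta(2\tau)^2\eta(8\tau)^2\big)$, so that each of $F^\ell\theta_2^{4(k-\ell)+2}$, $F^{2k-\ell}F_2^{\ell-k}\theta_2^2$ and $FF_2^k/\theta_2^2$ is an eta quotient $\prod_{\delta\mid 8}\eta(\delta\tau)^{r_\delta}$ with $\sum_\delta r_\delta=4k+2$. I would then apply the eta-quotient criterion (Theorem~\ref{GHN}): check the integrality conditions $24\mid\sum_\delta\delta\,r_\delta$ and $24\mid\sum_\delta(8/\delta)r_\delta$; check that $\prod_\delta\delta^{r_\delta}$ is a perfect square, which together with the odd weight $2k+1$ pins the nebentypus to $\chi_{-4}$; and check, via Ligozat's cusp-order formula---the order at the cusp $a/c$, $c\mid 8$, being a fixed positive multiple of $\sum_{\delta\mid 8}\gcd(c,\delta)^2 r_\delta/\delta$---that the order is non-negative at every cusp. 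The one subtle case is $FF_2^k/\theta_2^2$: the pole that $\theta_2^{-2}$ would create at the cusp where $\theta_2$ vanishes is exactly cancelled by the zero of $FF_2^k$ there. For (ii), the Cohen--Oesterl\'e dimension formula for $M_{2k+1}(\Gamma_0(8),\chi_{-4})$---using that $X_0(8)$ has genus $0$, no elliptic points, and four cusps---evaluates to $2k+2$ for every $k\ge 1$, matching the cardinality of the displayed set.

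For (iii), $\operatorname{ord}_\infty\prod_{\delta\mid 8}\eta(\delta\tau)^{r_\delta}=\tfrac1{24}\sum_\delta\delta\,r_\delta$; since $\operatorname{ord}_\infty F=1$, $\operatorname{ord}_\infty F_2=2$ and $\operatorname{ord}_\infty\theta_2=0$, a short computation gives $\operatorname{ord}_\infty(F^{\ell}\theta_2^{4(k-\ell)+2})=\ell$ for $0\le\ell\le k-1$, $\operatorname{ord}_\infty(F^{2k-\ell}F_2^{\ell-k}\theta_2^{2})=\ell$ for $k\le\ell\le 2k$, and $\operatorname{ord}_\infty(FF_2^k/\theta_2^2)=2k+1$. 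Thus the $2k+2$ elements have the pairwise distinct leading $q$-exponents $0,1,\dots,2k+1$, so they are linearly independent; combined with (i)--(ii), this proves the basis property and the uniqueness of the expansion.

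For (iv), write $g\in S_{2k+1}(8,\chi_{-4})$ in the basis with coefficients $\alpha_k(\ell),\beta_k(\ell),\gamma_k$. Since the only admissible factorizations $\chi_1\chi_2=\chi_{-4}$ with $\operatorname{cond}(\chi_1)\operatorname{cond}(\chi_2)\,t\mid 8$ are $(\mathbf 1,\chi_{-4},t)$ and $(\chi_{-4},\mathbf 1,t)$ with $t\in\{1,2\}$, we have $\dim E_{2k+1}(\Gamma_0(8),\chi_{-4})=4$, so cuspidality of $g$ amounts to exactly four independent linear conditions---the vanishing of $g$ at the cusps $\infty,0,\tfrac12,\tfrac14$. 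By (iii) the only basis element not vanishing at $\infty$ is $\theta_2^{4k+2}$, forcing $\alpha_k(0)=0$; at the cusp where $\theta_2$ vanishes the only basis element of order $0$ is $FF_2^k/\theta_2^2$, forcing $\gamma_k=0$. At the remaining two cusps every basis element has order $0$, and one evaluates its leading coefficient by conjugating each $\eta(\delta\tau)$ by the element of $SL_2(\mathbb Z)$ sending $\infty$ to that cusp and applying the $\eta$-transformation law; the expected outcome is that, up to one nonzero constant per cusp, the order-$\ell$ element contributes $4^{-\ell}$ at one cusp and $(-1)^\ell 4^{-\ell}$ at the other, while $FF_2^k/\theta_2^2$ contributes $2^{-(4k+3)}$, respectively $-2^{-(4k+2)}$; setting the two resulting linear combinations to zero yields $(2)$ and $(3)$. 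The main obstacle is precisely this last computation: producing not just the orders but the exact leading coefficients of the eta quotients at $0,\tfrac12,\tfrac14$, which requires carrying the Dedekind-$\eta$ multiplier (equivalently, Dedekind sums) through the relevant transformations and verifying that the roots of unity and powers of $2$ collapse to the clean values above. I would organize this by using the Atkin--Lehner involutions $W_2,W_8$ on $\Gamma_0(8)$, which permute the four cusps and act on $F,F_2,\theta_2$ in a controlled way, thereby reducing all four cusp evaluations to the evaluation at $\infty$ already performed in (iii).
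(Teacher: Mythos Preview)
Your proposal is correct and follows essentially the same route as the paper: verify modularity via Theorem~\ref{GHN}, match the count against $\dim M_{2k+1}(8,\chi_{-4})=2k+2$, read off linear independence from the distinct $q$-orders $0,1,\dots,2k+1$ at $\infty$, and obtain $(1)$--$(3)$ from vanishing at the four cusps $\infty,\tfrac14,0,\tfrac12$. The only substantive difference is in the step you flag as the main obstacle: rather than organizing the cusp evaluations through Atkin--Lehner involutions, the paper simply computes $(F|_2\gamma)(i\infty)$, $(F_2|_2\gamma)(i\infty)$, $(\theta_2|_{1/2}\gamma)(i\infty)$ for $\gamma\in\{S,\gamma_2,\gamma_4\}$ directly from the two $\eta$-transformations \eqref{dedtrans}, obtaining explicit constants (e.g.\ $(F|_2S)(i\infty)=-2^{-6}$, $(F_2|_2\gamma_2)(i\infty)=-2^{-6}$, $(\theta_2|_{1/2}\gamma_2)(i\infty)=e^{-\pi i/4}/\sqrt2$) and the orders $\operatorname{ord}_{1/4}F=\operatorname{ord}_{1/4}\theta_2^2=1$; plugging these into each basis element and setting the result to zero yields $(1)$--$(3)$ exactly as you anticipated.
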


These results lead to evaluations for the Lambert series on the left side of \eqref{qana} at CM points. 
\begin{theorem}\label{main4}
Let $H_{k}$ be defined as in Theorem \ref{main1}. For $r\in\mathbb{N}$, consider the CM point $\tau_r:=2^r i$. Then we have
\begin{align*}
H_k(\tau_r)&=\dfrac{a^{4k+2}}{2^{(2k+1)(r+3)}}\cdot \left(\dfrac{e^{-2^{r+1}(k+1)\pi}}{4^{k+1}}+\sum_{\ell=1}^{k-1}\dfrac{\alpha_k(\ell)}{4^\ell}\cdot e^{M(\ell,r)}+\sum_{\ell=k}^{2k}\dfrac{\beta_k(\ell)}{4^\ell}\cdot e^{M(\ell,r)}\right)\cdot e^{J(k,r)},
\end{align*}
where
\begin{align*}
J(k,r)&:=\frac{\pi(2k+1)}{3}-\frac{2k+1}{\pi}\sum_{m=1}^{r+1}2^m L_m-\frac{2^{r+2}(4k+1)}{\pi}L_{r+2}-\frac{2^{r+4}}{\pi}L_{r+3},\notag\\
M(\ell,r)&:=-\frac{\ell\cdot 2^{r+1}}{\pi}\sum_{m=1}^{r+1}2^m L_m-\frac{2^{r+3}(k-\ell+1)}{\pi}L_{r+2}+\frac{2^{r+5}(k-\ell+1)}{\pi}L_{r+3},
\end{align*}
and 
where $L_\ell$ is defined by convergent double series  
\begin{align*}
L_\ell:=\sum_{m,n\geq 1}\dfrac{(-1)^m}{n^2+2^\ell m^2}.   
\end{align*}
In particular,
\begin{align} \label{ev}
\sum_{m,n\geq 1}\dfrac{(-1)^m}{n^2+2^\ell m^2}=\begin{cases}
-\dfrac{\pi^2}{24}-\dfrac{\pi\cdot\log 2}{8},&\ell=1,\\ -\dfrac{7\pi^2}{96}-\dfrac{\pi\cdot\log 2}{32}-\dfrac{\pi\cdot\log(\sqrt{2}-1)}{8},&\ell=2,\\
-\dfrac{31\pi^2}{384}-\dfrac{5\pi\log 2}{128}+\dfrac{\pi\cdot\log(\sqrt{2}-1)}{32}-\dfrac{\pi\cdot\log(1-2^{-1/4})}{8},&\ell=3.
\end{cases}    
\end{align}
\end{theorem}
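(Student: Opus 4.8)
The plan is to evaluate $H_k$ at $\tau_r = 2^r i$ by combining three ingredients: the basis decomposition of the modular form $f_k - (f_k - H_k)$ from Theorems~\ref{main1}--\ref{main3}, the transformation law for the Dedekind eta function, and the Chowla--Selberg-type identification of $\eta$-values at the CM points $2^r i$ with the series $L_\ell$. First I would write $H_k = f_k - T_{2k+1}$, where $T_{2k+1} \in S_{2k+1}(8,\chi_{-4})$ is the cusp component from Theorem~\ref{main2}; expanding $T_{2k+1}$ in the basis of Theorem~\ref{main3} and using the cusp conditions (1)--(3) to eliminate $\alpha_k(0)$ and $\gamma_k$, one obtains $H_k(\tau) = f_k(\tau) - \sum_{\ell=1}^{k-1}\alpha_k(\ell)F^\ell\theta_2^{4(k-\ell)+2} - \sum_{\ell=k}^{2k}\beta_k(\ell)F^{2k-\ell}F_2^{\ell-k}\theta_2^2$. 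Next, each of $f_k$, $F$, $F_2$, $\theta_2$ is an eta quotient (or a ratio of them), so at $\tau = \tau_r$ every term becomes a monomial in $\eta(2^s i)$ for $s = r, r+1, r+2, r+3$ times an explicit power of $q = e^{-2^{r+1}\pi}$; collecting the $\eta$-powers should produce, after using $\eta(\tau)^{24}$ modularity to reduce everything to a single common factor $a := $ (an appropriate $\eta$-value combination), the prefactor $a^{4k+2}/2^{(2k+1)(r+3)}$ and the leading exponentials $e^{-2^{r+1}(k+1)\pi}/4^{k+1}$ and $e^{M(\ell,r)}$, $e^{J(k,r)}$.

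The second half of the argument is to identify the constants $L_\ell = \sum_{m,n\ge 1}(-1)^m/(n^2 + 2^\ell m^2)$ that appear in the exponents $J(k,r)$ and $M(\ell,r)$. The natural route is to recognize $\sum_{m,n\ge 1}(-1)^m/(n^2+2^\ell m^2)$ as (half of, up to the missing diagonal terms) a value of an Epstein-type / Eisenstein series attached to the quadratic form $x^2 + 2^\ell y^2$ twisted by the sign character $(-1)^m$, and relate it via the Kronecker limit formula to $\log|\eta|$ at the CM point $2^{\ell/2} i$ (or $2^{\lceil \ell/2\rceil} i$ after a rescaling). Concretely, I would use the classical expansion, valid for $\operatorname{Im}\tau > 0$,
\begin{equation*}
\sum_{m,n\ge 1} \frac{(-1)^m}{n^2 + 2^\ell m^2} = \text{(explicit elementary term)} + (\text{const})\cdot \log\bigl|\text{eta quotient}(2^{\lceil \ell/2\rceil} i)\bigr|,
\end{equation*}
obtained by summing the inner $n$-sum with the Fourier expansion $\sum_{n\in\mathbb Z} 1/(n^2 + a^2) = (\pi/a)\coth(\pi a)$ and then summing the resulting geometric-type series over $m$; the $\coth$ contributes the $\pi^2$ and $\pi\log 2$ pieces, while the $\log(1 - e^{-2\pi a})$-type remainders assemble into $\log$ of an $\eta$-quotient. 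For $\ell = 1,2,3$ the relevant CM points are $i\sqrt{2}$, $2i$, $2i\sqrt 2$ (equivalently $i$, $2i$ after the scaling built into the $2^\ell$), whose $\eta$-values are classical (Chowla--Selberg), and substituting the known closed forms for $\eta(i)$, $\eta(2i)$, $\eta(i\sqrt2)$, $\eta(2i\sqrt2)$ produces the stated expressions involving $\log 2$, $\log(\sqrt2 - 1)$, and $\log(1 - 2^{-1/4})$; this is the content of \eqref{ev}, which also serves as the base case that pins down the free multiplicative constant $a$ in the general formula.

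The main obstacle I anticipate is bookkeeping rather than conceptual: tracking the exact powers of $2$, the precise $\eta$-exponents, and the $q^{1/24}$ fractional prefactors through the eta-quotient substitutions, and verifying that the cusp conditions (1)--(3) of Theorem~\ref{main3} make the $\ell = 0$ and $\gamma_k$ terms drop out cleanly so that the sums in $H_k(\tau_r)$ start at $\ell = 1$ with the displayed $e^{M(\ell,r)}$ exponents. A secondary subtlety is the convergence and rearrangement of the double series $L_\ell$ — it is only conditionally convergent in the naive ordering, so I would fix the order of summation (inner sum over $n$ first, as the $\coth$ identity dictates) and justify that this matches the Epstein-series regularization used to obtain the Kronecker limit formula evaluation, so that the elementary closed forms in \eqref{ev} are genuinely equal to the series as written. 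Once the $\ell \le 3$ evaluations are in hand, the general statement follows by applying the same eta-transformation computation uniformly in $k$ and $r$, with \eqref{ev} (at $r$ chosen minimally) fixing the one remaining normalization.
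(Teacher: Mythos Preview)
Your overall plan is on the right track: the decomposition $H_k = f_k - T_{2k+1}$ with $T_{2k+1}$ expanded in the basis of Theorem~\ref{main3} (using condition (1) to kill the $\ell=0$ and $\gamma_k$ terms) is exactly what the paper does, up to a sign convention in the $\alpha_k,\beta_k$. Substituting $\tau=\tau_r$ then reduces everything to monomials in $\eta(2^{r+1}i),\ldots,\eta(2^{r+3}i)$, as you say.

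The genuine gap is in how the double series $L_\ell$ enter the exponents. You write that ``collecting the $\eta$-powers'' and ``$\eta(\tau)^{24}$ modularity'' will produce $e^{J(k,r)}$ and $e^{M(\ell,r)}$, but modularity alone cannot do this: the $L_\ell$ are transcendental constants that do not arise from any algebraic relation among the $\eta(2^s i)$. The paper's mechanism is an explicit formula (Lemma~\ref{eta2k}),
\[
\eta(2^ki)=\dfrac{\pi^{1/4}}{\Gamma(3/4)\,2^{(k+1)/2}}\exp\!\left(-\dfrac{\pi(2^k-1)^2}{12\cdot 2^k}-\dfrac{1}{2\pi}\sum_{\ell=1}^k 2^\ell L_\ell\right),
\]
derived by writing $\eta(2^ki)=e^{-2^k\pi/24}\,g_k\,\eta(2^{k-1}i)$ with $g_k=\prod_{\ell\ge1}(1+e^{-2^k\pi\ell})$ and computing $\log g_k$ via Stieltjes integration against $[t]=t-\{t\}$ followed by the Fourier expansion of $\{t\}$; the constant $L_k$ is exactly the value of the resulting sine transform. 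Plugging this into $f_k,F,F_2,\theta_2$ at $\tau_r$ is what produces $J(k,r)$ and $M(\ell,r)$, with $a=\pi^{1/4}/\Gamma(3/4)$.

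Your proposed $\coth$/Kronecker-limit route sums the $n$-variable first and relates $L_\ell$ to $\log\prod_{j\ge1}(1+e^{-2^{\ell/2+1}\pi j})$, i.e.\ to an $\eta$-ratio at the CM point $2^{\ell/2}i$. For odd $\ell$ that point lies in $\mathbb{Q}(\sqrt{-2})$ rather than $\mathbb{Q}(i)$, and in any case the relation you obtain is between $L_\ell$ and a \emph{single} product at half the exponent, not the telescoping sum $\sum_{\ell=1}^k 2^\ell L_\ell$ that actually governs $\eta(2^ki)$. So this route does not reproduce Lemma~\ref{eta2k} and hence does not yield the displayed formula for $H_k(\tau_r)$. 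It \emph{is} a legitimate alternative for the second assertion \eqref{ev}: one can evaluate $L_1,L_2,L_3$ either your way (via $\coth$ and classical values at $i\sqrt2,\,2i,\,2i\sqrt2$) or the paper's way, which simply compares Lemma~\ref{eta2k} at $k=1,2,3$ against Ramanujan's closed forms for $\eta(2i),\eta(4i),\eta(8i)$ from \cite{MR1486573}.
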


In Section \ref{s2}, a proof of Theorem \ref{main1} is given by using evaluations for the appropriate modular forms at cusps. Section \ref{s3} is devoted to a proof of Theorem \ref{main2} and includes a derivation of the limiting values as $q\to 1^{-}$. In Section \ref{s4}, we develop a basis for the modular forms in question. This uniquely determines the eta quotients appearing in the strong $q$-analogues of \eqref{betagenvalues}. Section \ref{s5} addresses the CM evaluations and includes a proof the Theorem \ref{main4}. Section \ref{s6} concludes with an asymptotic formula for the coefficients given in Corollary \ref{cor1}.

\section{Proof of Theorem \ref{main1}} \label{s2}
We begin with a fundamental result for determining the modularity of certain quotients of the Dedekind eta function considered in this work \cite{GH,Ono}. 
\begin{theorem}[Gordon, Hughes and Newman]\label{GHN}
Let $f(\tau)=\prod_{\delta\mid N}\eta(\delta\tau)^{r_\delta}$ be an eta-quotient where $r_\delta\in\mathbb{Z}$ and $k=\frac{1}{2}\sum_{\delta\mid N}r_\delta$. Let $\alpha=r/s$ be any cusp representative with $r, s\in\mathbb{N}$, and $s\mid N$. If $f(\tau)$ satisfies the following properties:
     \begin{align*} &(1) \quad \displaystyle\sum_{\delta\mid N}\delta r_\delta\equiv 0\pmod{24},\qquad 
    (2) \quad \displaystyle\sum_{\delta\mid N}\dfrac{N}{\delta} r_\delta\equiv 0\pmod{24}, \\
    &(3) \quad \displaystyle\dfrac{N}{24}\sum_{\delta\mid N}\dfrac{(s,\delta)^2r_\delta}{(s,N/s)s\delta}\geq 0. \end{align*}
Then $f(\tau)$ is a holomorphic modular form on $\Gamma_0(N)$ with Nebentypus $\chi=\left(\frac{(-1)^k\prod_{\delta\mid N}\delta^{r_\delta}}{\cdot}\right)$ satisfying 
\begin{equation*}
(f|_k\gamma)(\tau)=\chi(d)f(\gamma\tau),\quad \gamma=\begin{pmatrix}
a&b\\c&d    
\end{pmatrix}\in\Gamma_0(N).    
\end{equation*}
If the inequality in (3) is strict, $f(\tau)$ is a cusp form.
\end{theorem}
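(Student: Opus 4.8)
The plan is to reduce everything to the transformation law for the Dedekind eta function itself and then to track, in order, (i) the automorphy factor, (ii) the multiplier, and (iii) the order of vanishing at each cusp. Recall that for $\gamma=\begin{pmatrix} a&b\\c&d\end{pmatrix}\in SL_2(\mathbb{Z})$ with $c>0$ one has $\eta(\gamma\tau)=\varepsilon(\gamma)(c\tau+d)^{1/2}\eta(\tau)$, where $\varepsilon(\gamma)$ is an explicit $24$th root of unity given by a Dedekind sum (equivalently, by a Jacobi symbol times an explicit phase, with the usual case distinction on the parity of the lower-left entry). The key device is that for $\gamma\in\Gamma_0(N)$ and $\delta\mid N$ we have $N\mid c$, hence $\delta\mid c$, and
\begin{align*}
\delta\,\gamma\tau=\frac{a(\delta\tau)+b\delta}{(c/\delta)(\delta\tau)+d}=\gamma_\delta(\delta\tau),\qquad \gamma_\delta:=\begin{pmatrix} a & b\delta\\ c/\delta & d\end{pmatrix}\in SL_2(\mathbb{Z}),
\end{align*}
so that each factor $\eta(\delta\tau)$ of $f$ is transformed by an honest element of $SL_2(\mathbb{Z})$.

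Applying the eta formula to each $\gamma_\delta$, the automorphy factor is $\bigl((c/\delta)(\delta\tau)+d\bigr)^{1/2}=(c\tau+d)^{1/2}$, independent of $\delta$; raising to the power $r_\delta$ and multiplying over $\delta\mid N$ therefore produces the single factor $(c\tau+d)^{\frac12\sum_\delta r_\delta}=(c\tau+d)^k$, which fixes the weight. It then remains to show the accumulated multiplier $\prod_{\delta\mid N}\varepsilon(\gamma_\delta)^{r_\delta}$ equals $\chi(d)=\left(\frac{(-1)^k\prod_\delta \delta^{r_\delta}}{d}\right)$, and this is where conditions (1) and (2) enter. Condition (1) is exactly the triviality of the translation phase: since $T=\begin{pmatrix}1&1\\0&1\end{pmatrix}\in\Gamma_0(N)$ and $\eta(\delta\tau+\delta)=e^{\pi i\delta/12}\eta(\delta\tau)$, invariance $f(\tau+1)=f(\tau)$ forces $24\mid\sum_\delta\delta r_\delta$. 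Condition (2) is the complementary requirement at the cusp $0$, obtained by running the same computation through the Fricke involution $\tau\mapsto -1/(N\tau)$, under which $\eta(\delta\tau)$ is carried to a multiple of $\eta((N/\delta)\tau)$ and $\delta$ is swapped with $N/\delta$; this controls the phase $e^{\pi i (N/\delta) r_\delta/12}$ and forces $24\mid\sum_\delta (N/\delta)r_\delta$. Granting (1) and (2), I would expand the Dedekind-sum part of each $\varepsilon(\gamma_\delta)$ and invoke the reciprocity law for Dedekind sums to collapse the product of roots of unity into the real Jacobi symbol displayed above.

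For the analytic conditions, $f$ is holomorphic and nonvanishing on $\mathbb{H}$ because $\eta$ is, so only the cusps require attention. For a cusp $r/s$ with $s\mid N$, I would choose $\sigma\in SL_2(\mathbb{Z})$ with $\sigma\infty=r/s$ and examine the leading term of the $q$-expansion of $(f|_k\sigma)(\tau)$ in the local uniformizer $q^{1/h}$, with $h$ the cusp width. A factor-by-factor analysis of $\eta(\delta\sigma\tau)$ near $i\infty$, using the transformation formula to move the cusp to $\infty$ and reading off the leading power, yields the order formula
\begin{align*}
\operatorname{ord}_{r/s}(f)=\frac{N}{24}\sum_{\delta\mid N}\frac{(s,\delta)^2\,r_\delta}{(s,N/s)\,s\,\delta}.
\end{align*}
Thus condition (3) is precisely nonnegativity of the order at every cusp, i.e.\ holomorphy at the cusps, which upgrades $f$ to a holomorphic modular form; if the inequality is strict at each cusp, $f$ vanishes at all cusps and is therefore a cusp form.

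The main obstacle is the multiplier computation of the second step: showing that the product $\prod_\delta\varepsilon(\gamma_\delta)^{r_\delta}$ of Dedekind-type $24$th roots of unity reduces, under hypotheses (1) and (2), to the clean real character $\left(\frac{(-1)^k\prod_\delta\delta^{r_\delta}}{d}\right)$. This is where the full strength of the eta multiplier system and a reciprocity argument are needed. By comparison, the weight computation is immediate, and the cusp-order formula, while requiring careful local bookkeeping, is essentially mechanical once the transformation formula for $\eta$ is in hand.
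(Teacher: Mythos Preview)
The paper does not prove this theorem; it is quoted as a known result from the literature (attributed to Gordon--Hughes and Newman, with references \cite{GH,Ono}) and used as a black box to verify that the eta quotients $f_k(\tau)$ and the basis elements in Theorem~\ref{main3} lie in the appropriate spaces $M_{2k+1}(8,\chi_{-4})$. There is therefore no proof in the paper to compare your proposal against.

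That said, your outline is a reasonable sketch of how the result is established in the cited sources: the weight computation via the common automorphy factor $(c\tau+d)^{1/2}$ is correct; the identification of conditions (1) and (2) as the integrality conditions needed to kill the translation phases at $i\infty$ and at $0$ (via Fricke) is standard; and the cusp-order formula you quote is exactly the one appearing in Ono's book. The part you flag as the main obstacle---collapsing $\prod_\delta \varepsilon(\gamma_\delta)^{r_\delta}$ to the Jacobi symbol $\left(\frac{(-1)^k\prod_\delta \delta^{r_\delta}}{d}\right)$---is indeed the technical heart, and in the literature it is handled not via Dedekind-sum reciprocity directly but via the explicit Petersson--Knopp formula for $\varepsilon(\gamma)$ in terms of Jacobi symbols (with separate cases for $c$ odd and $c$ even), followed by multiplicativity and quadratic reciprocity to simplify the product. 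Your sketch gestures at the right ingredients but does not carry out this reduction; if you intend to supply a self-contained proof, that step requires substantially more detail than the rest.
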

Thus $f_k(\tau)$ is a holomorphic modular form of weight $2k+1$ over $\Gamma_0(8)$ and Nebentypus $\chi_{-4}$. Since $\text{dim}(M_{2k+1}(8,\chi_{-4}))=2k+2$, and $\text{dim}(S_{2k+1}(8,\chi_{-4}))=2k-2$, the corresponding Eisenstein space has dimension $2$. With $H_k(\tau)$ as in Theorem \ref{main1}, we show that 
\begin{align}\label{cuspform}
f_k(\tau)-H_k(\tau)\in S_{2k+1}(8,\chi_{-4})    
\end{align}
by showing that $H_{k}$ represents the linear combination of the two Eisenstein series appearing in each expansion. The coefficients in the linear combination are computed by evaluating both $f_{k}$ and $H_{k}$ at the inequivalent cusps $\left\{0,\frac{1}{2},\frac{1}{4},i\infty\right\}$ determined by $\Gamma_{0}(8)$. 
First, we calculate the value of $f_k(\tau)$ at each of the cusps. Recall the following transformations for $\eta(\tau)$:
\begin{align}\label{dedtrans}
\eta\left(-\frac{1}{\tau}\right)=\sqrt{-i\tau}\eta(\tau),\quad\text{and}\quad \eta\left(\tau+\frac{1}{2}\right)=e^{\frac{2\pi i}{48}}\dfrac{\eta(2\tau)^3}{\eta(\tau)\eta(4\tau)}.    
\end{align}
Clearly, $f_k(i\infty)=0$. In order to compute the value of $f_k$ at the cusps $0, 1/2$ and $1/4$, we consider the matrices
\begin{equation}\label{matrices}
 S=\begin{pmatrix}
0&-1\\1&0    
\end{pmatrix}, \quad \gamma_2=\begin{pmatrix}
1&0\\2&1    
\end{pmatrix} \quad \text{and}\quad \gamma_4=\begin{pmatrix}
1&0\\4&1    
\end{pmatrix}.   
\end{equation}
The transformations in \eqref{dedtrans} imply
\begin{align} \label{feval1}
(f_k|_{2k+1}S)\left(\tau\right)
&=\dfrac{(-i)^{2k+1}}{2^{6k+4}}\cdot\dfrac{(q_4;q_4)_\infty^{8k-2}(q_8;q_8)_\infty^4}{(q_2;q_2)_\infty^{4k}},\\
\label{feval2}
(f_k|_{2k+1}\gamma_2)\left(\tau\right)&=\dfrac{i}{2^{4k+3}}\cdot\dfrac{(q^2;q^2)_\infty^{20k-6}(\xi_4q_2;q_2)_\infty^4}{(q^4;q^4)_\infty^{8k-2}(q;q)_\infty^{8k-2}},\\
\label{feval3}
(f_k|_{2k+1}\gamma_4)\left(\tau\right)&=-\dfrac{e^{-\frac{\pi i k}{3}}}{2^{10}}\cdot q^k\dfrac{(q^4;q^4)_\infty^{8k+10}}{(q^2;q^2)_\infty^{4k+4}(q^8;q^8)_\infty^4}.
\end{align}
Therefore $(f\mid_{2k+1} S)(i\infty)=\dfrac{(-i)^{2k+1}}{2^{6k+4}}$, $(f\mid_{2k+1}\gamma_2)(i\infty)=\dfrac{i}{2^{4k+3}}$, and $(f\mid_{2k+1} \gamma_4)(i\infty)=0$. 

We next compute the values of the Eisenstein series $\mathscr{E}_{2k+1,\chi_{-4},1}(2\tau)$ and $\mathscr{E}_{2k+1,\chi_{-4},\chi_2}(2\tau)$ at each of the cusps.  This will involve the level $N$ Eisenstein series notation introduced in Koblitz \cite{Koblitz},
\begin{equation} \label{kobeis}
G_{\ell}^{(\underline{a},N)}(\tau):=\sideset{}{'}\sum_{\substack{m_1,m_2\in\mathbb{Z}\\m_1\equiv a_1\!\!\!\!\!\!\pmod{N}\\m_2\equiv a_2\!\!\!\!\!\!\pmod{N}}}\dfrac{1}{(m_1\tau+m_2)^{\ell}}    
\end{equation}
for $\ell, N\in\mathbb{N}$, and $\underline{a}=(a_1,a_2)$, and where the tuple is reduced modulo $N$.  
For $\gamma\in SL_2(\mathbb{Z})$, we have the following crucial property of $G_{\ell}^{(\underline{a},N)}(\tau)$ permitting a re-centering the Eisenstein series and allowing for calculation of the value at a cusp:
\begin{equation}\label{transKob}
(G_{\ell}^{(\underline{a}, N)}|_\ell\gamma)(\tau)=G_{\ell}^{(\underline{a}\gamma,N)}(\tau).    
\end{equation}
We will need the following connection \cite{Miyake} between the Eisenstein series of the form \eqref{meis} and Koblitz' Eisenstein series of level $N$ defined by \eqref{kobeis}. 
\begin{lemma}\label{MiyKob}
Assuming the notations above, let $L\mid N$, and $R\mid N$. Furthermore, let $u, v\in\mathbb{N}$ be such that $uL\mid N$, and $vR\mid N$, then 
\begin{equation*}
M_{k,\chi,\psi}\left(\frac{u}{v}\tau\right)=v^k\sum_{\substack{0\leq a_1, a_2<N\\u\mid a_1, v\mid a_2}}\chi(a_1/u)\psi(a_2/v)G_k^{(\underline{a}, N)}(\tau).    
\end{equation*}
Equivalently, in view of \eqref{Eisensteinseries}, we have
\begin{equation*}
\mathscr{E}_{k,\chi,\psi}\left(\frac{u}{v}\tau\right)=\dfrac{v^k}{\mathcal{A}_k}\sum_{\substack{0\leq a_1, a_2<N\\u\mid a_1, v\mid a_2}}\chi(a_1/u)\psi(a_2/v)G_k^{(\underline{a}, N)}(\tau).        
\end{equation*}
\end{lemma}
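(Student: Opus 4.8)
\textbf{Proof proposal for Lemma \ref{MiyKob}.}

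The plan is to prove the stated identity by directly expanding the defining Dirichlet series for $M_{k,\chi,\psi}$ and sorting the summation indices modulo $N$. Recall that
\[
M_{k,\chi,\psi}(\tau)=\sideset{}{'}\sum_{m,n\in\mathbb{Z}}\dfrac{\chi(m)\psi(n)}{(m\tau+n)^k}.
\]
Substituting $\tau\mapsto (u/v)\tau$ gives $(m(u/v)\tau+n)^k = v^{-k}(mu\,\tau+nv)^k$, so that
\[
M_{k,\chi,\psi}\left(\tfrac{u}{v}\tau\right)=v^k\sideset{}{'}\sum_{m,n\in\mathbb{Z}}\dfrac{\chi(m)\psi(n)}{(mu\,\tau+nv)^k}.
\]
First I would set $m_1 = mu$ and $m_2 = nv$; then $m_1$ ranges over multiples of $u$ and $m_2$ over multiples of $v$, with $\chi(m)=\chi(m_1/u)$ and $\psi(n)=\psi(m_2/v)$. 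The key step is then to partition this sum according to the residues $(a_1,a_2)$ of $(m_1,m_2)$ modulo $N$; since $u\mid N$ and $v\mid N$, the conditions $u\mid m_1$ and $v\mid m_2$ are equivalent to $u\mid a_1$ and $v\mid a_2$, and because $\chi$ has modulus $L\mid N$ and $\psi$ has modulus $R\mid N$, the character values $\chi(m_1/u)$ and $\psi(m_2/v)$ depend only on $m_1,m_2$ modulo $N$ (one should check that $u\cdot(N/u)$ is a multiple of $uL$, using $uL\mid N$, so that shifting $m_1$ by $N$ shifts $m_1/u$ by a multiple of $L$, leaving $\chi$ unchanged; similarly for $\psi$). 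Hence within each residue class the character factor is the constant $\chi(a_1/u)\psi(a_2/v)$, and the inner sum over $(m_1,m_2)$ in that class of $1/(m_1\tau+m_2)^k$ is precisely $G_k^{(\underline{a},N)}(\tau)$ as defined in \eqref{kobeis}. Collecting terms yields
\[
M_{k,\chi,\psi}\left(\tfrac{u}{v}\tau\right)=v^k\sum_{\substack{0\leq a_1,a_2<N\\ u\mid a_1,\ v\mid a_2}}\chi(a_1/u)\psi(a_2/v)\,G_k^{(\underline{a},N)}(\tau),
\]
which is the first displayed equality. The second (the $\mathscr{E}$ version) then follows immediately by dividing through by $\mathcal{A}_k$ and invoking the normalization \eqref{Eisensteinseries}.

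The only genuinely delicate point—the ``main obstacle,'' such as it is—is the bookkeeping around convergence and the exclusion of $(0,0)$. The series for $M_{k,\chi,\psi}$ is only conditionally convergent when $k=1$ or $2$, so for those weights one should phrase the rearrangement via Hecke's convergence-factor regularization (Eisenstein summation), noting that the reindexing $m_1=mu$, $m_2=nv$ and the finite partition into residue classes are compatible with that summation order; for $k\geq 3$ absolute convergence makes all rearrangements legitimate. One must also track the excluded term: the pair $(m,n)=(0,0)$ corresponds to $(m_1,m_2)=(0,0)$, i.e. the residue class $\underline{a}=(0,0)$, and the primed sum $G_k^{(\underline{0},N)}$ already omits $(0,0)$, so no mismatch arises—but it is worth stating explicitly. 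I expect everything else to be a routine matter of matching definitions, so this lemma is essentially a direct computation rather than a result requiring a new idea.
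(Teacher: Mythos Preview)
Your argument is correct: the substitution $\tau\mapsto (u/v)\tau$, the reindexing $m_1=mu$, $m_2=nv$, and the partition into residue classes modulo $N$ (using $uL\mid N$ and $vR\mid N$ to see that the character values are constant on each class) give exactly the claimed identity, and your handling of the excluded $(0,0)$ term and the convergence caveat for small $k$ are appropriate. Note that the paper does not actually supply a proof of this lemma; it simply states it with a citation to Miyake \cite{Miyake}, so there is no ``paper's proof'' to compare against---your direct computation is the standard derivation one would find in that reference.
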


By \eqref{Eisensteinseries}, both the Eisenstein series $\mathscr{E}_{2k+1,\chi_{-4},1}(2\tau)$ and $\mathscr{E}_{2k+1,\chi_{-4},\chi_2}(2\tau)$ equal zero at $\tau=i\infty$. From \eqref{transKob} and Lemma \ref{MiyKob}, we have, for $\gamma= \left (\begin{smallmatrix}
 a&b\\c&d    
 \end{smallmatrix} \right)\in SL_2(\mathbb{Z})$,
\begin{equation}\label{slashEin}
(\mathscr{E}_{k,\chi,\psi}|_{2k+1}\gamma)\left(\frac{u}{v}\tau\right)=\dfrac{v^{2k+1}}{\mathcal{A}_{2k+1}}\sum_{\substack{0\leq a_1, a_2<N\\u\mid a_1, v\mid a_2}}\chi(a_1/u)\psi(a_2/v)G_{2k+1}^{(\underline{a}\gamma, N)}(\tau).
\end{equation}
Then \eqref{slashEin} allows us to calculate, for each of the matrices in \eqref{matrices},
\begin{align}\label{Sgamma}
(\mathscr{E}_{k,\chi,\psi}|_{2k+1}S)\left(\frac{u}{v}\tau\right)&=\dfrac{v^{2k+1}}{\mathcal{A}_{2k+1}}\sum_{\substack{0\leq a_1, a_2<8\\u\mid a_1, v\mid a_2}}\chi(a_1/u)\psi(a_2/v)G_{2k+1}^{((a_2,8-a_1), 8)}(\tau),\notag\\
(\mathscr{E}_{k,\chi,\psi}|_{2k+1}\gamma_2)\left(\frac{u}{v}\tau\right)&=\dfrac{v^{2k+1}}{\mathcal{A}_{2k+1}}\sum_{\substack{0\leq a_1, a_2<8\\u\mid a_1, v\mid a_2}}\chi(a_1/u)\psi(a_2/v)G_{2k+1}^{((a_1+2a_2\!\!\!\!\!\!\pmod{8},a_2), 8)}(\tau),\notag\\
(\mathscr{E}_{k,\chi,\psi}|_{2k+1}\gamma_4)\left(\frac{u}{v}\tau\right)&=\dfrac{v^{2k+1}}{\mathcal{A}_{2k+1}}\sum_{\substack{0\leq a_1, a_2<8\\u\mid a_1, v\mid a_2}}\chi(a_1/u)\psi(a_2/v)G_{2k+1}^{((a_1+4a_2\!\!\!\!\!\!\pmod{8},a_2), 8)}(\tau).
\end{align}
Using \eqref{Sgamma}, we obtain the values of the two Eisenstein series above at the cusps $\tau=0, 1/2, 1/4$ indicated in the table below.
\vspace{0.5cm}

\begin{center}
\begin{tabular}{ |c|c|c|c|c| } 
\hline

\backslashbox{Eisenstein series}{Cusps} & 0 & $1/2$ & $1/4$ \\
\hline
\hline
$\mathscr{E}_{2k+1,\chi_{-4},1}(2\tau)$ & $\frac{(-i)^{2k+1}(-1)^kE_{2k}}{2^{6k+4}}$ & $\frac{(-i)^{2k+1}(-1)^kE_{2k}}{2^{4k+3}}$ & $0$\\ 
\hline
$\mathscr{E}_{2k+1,\chi_{-4},\chi_2}(2\tau)$ & $0$ & $\frac{(-i)^{2k+1}(-1)^kE_{2k}}{2^{2k+2}}$ &$0$\\ 
\hline
\end{tabular}
\end{center}
\vspace{0.5cm}
Taken together, the cuspidal values given in \eqref{feval1}-\eqref{feval3} and the evaluations of the Eisenstein series in the table above imply \eqref{cuspform}.

\section{Proof of Theorem \ref{main2}} \label{s3}
Note that $1$ is a primitive Dirichlet character with modulus and conductor $1$, and $\chi_2$ is a Dirichlet character with modulus $2$ and conductor $1$. Thus, \eqref{meis} yields the Fourier expansion
\begin{align}\label{EiEu1}
    \mathscr{E}_{2k+1,\chi_{-4},1}(2\tau)&=\sum_{n\geq 1}\left(\sum_{d\mid n}\chi_{-4}(n/d)d^{2k}\right)q^{2n}=\sum_{n\geq 1}\sum_{m\geq 1}\chi_{-4}(n)m^{2k}q^{2mn}\notag\\
    &=\sum_{n\geq 1}\chi_{-4}(n)\sum_{m\geq 1}m^{2k}(q^{2n})^m,
\end{align}
and
\begin{align}\label{Eis2}
\mathscr{E}_{2k+1,\chi_{-4},\chi_2}(2\tau)=2^{2k+1}\mathscr{E}_{2k+1,\chi_{-4},1}(2\tau)-\mathscr{E}_{2k+1,\chi_{-4},1}(\tau).    
\end{align}
By \eqref{Eis2},
\begin{align}\label{EiEu2}
&2^{2k}\mathscr{E}_{2k+1,\chi_{-4},1}(2\tau)- \mathscr{E}_{2k+1,\chi_{-4},\chi_2}(2\tau)=\mathscr{E}_{2k+1,\chi_{-4},1}(\tau)-2^{2k}\mathscr{E}_{2k+1,\chi_{-4},1}(2\tau)\notag\\
&\hspace{2cm}=\sum_{n\geq 1}\chi_{-4}(n)\sum_{m\geq 1}m^{2k}(q^{n})^m-2^{2k}\cdot \sum_{n\geq 1}\chi_{-4}(n)\sum_{m\geq 1}m^{2k}(q^{2n})^m.
\end{align}
Next, apply the generating function for Eulerian polynomials (see \cite{DF, FH})
\begin{equation}\label{Eulerianp}
 \sum_{\ell\geq 1}\ell^r t^\ell=\dfrac{t A_r(t)}{(1-t)^{r+1}}  
 \end{equation}
to the inner sums in \eqref{EiEu1} and \eqref{EiEu2} to obtain
\begin{align}\label{qa1}
\mathscr{E}_{2k+1,\chi_{-4},1}(2\tau)=\sum_{n\geq 1}\chi_{-4}(n)\dfrac{q^{2n} A_{2k}(q^{2n})}{(1-q^{2n})^{2k+1}},   
\end{align}
and 
\begin{align}\label{qa2}
2^{2k}\mathscr{E}_{2k+1,\chi_{-4},1}(2\tau)- \mathscr{E}_{2k+1,\chi_{-4},\chi_2}(2\tau)&=\sum_{n\geq 1}\chi_{-4}(n)\dfrac{q^{n}A_{2k}(q^{n})}{(1-q^{n})^{2k+1}}-2^{2k}\cdot\sum_{n\geq 1}\chi_{-4}(n)\dfrac{q^{2n} A_{2k}(q^{2n})}{(1-q^{2n})^{2k+1}}\notag\\
&=\sum_{n\geq 1}\chi_{-4}(n)\dfrac{q^{n}P_{k}(q^{n})}{(1-q^{2n})^{2k+1}},
\end{align}
where $P_{k}(t)\in\mathbb{Z}[t]$ is of degree $4k$, and defined by 
\begin{equation*}
P_k(t):=(1+t)^{2k+1}\cdot A_{2k}(t)-2^{2k}\cdot t\cdot A_{2k}(t^2).    
\end{equation*}
Theorem \ref{main1} and identities \eqref{qa1} and \eqref{qa2} imply \eqref{qana}. We next show identities \eqref{qana} are $q$-analogues of \eqref{betagenvalues}. For any $n\in\mathbb{N}$, by L'Hospital rule,
\begin{equation*}
\dfrac{4n^2}{4n^2-1}=\lim_{q\rightarrow 1^-}\dfrac{(1-q^{2n})^2}{(1-q^{2n+1})(1-q^{2n-1})}=\lim_{q\rightarrow 1^-}(1-q)\dfrac{(1-q^{2n})^2}{(1-q^{2n-1})^2}=\lim_{q\rightarrow 1^-}(1-q)\dfrac{(1-q^{2n})^4}{(1-q^{n})^2},    
\end{equation*}
which yields the following equivalent form of Wallis' formula:
\begin{align}\label{Wallisequiv}
\dfrac{\pi}{2}=\lim_{q\rightarrow 1^-}(1-q)\prod_{n=1}^\infty\dfrac{(1-q^{2n})^4}{(1-q^n)^2}=\lim_{q\rightarrow 1^-}(1-q)\dfrac{(q^2;q^2)_\infty^4}{(q;q)_\infty^2}.    
\end{align}
Using \eqref{Wallisequiv}, we find
\begin{align}\label{etaval}
\lim_{q\rightarrow 1^-}(1-q)^{2k+1}f_k(\tau)&=\lim_{q\rightarrow 1^-}(1-q)^{2k+1} q^{k+1}\dfrac{(q^4;q^4)_\infty^{8k-2}(q^8;q^8)_\infty^4}{(q^2;q^2)_\infty^{4k}}\notag\\
&=\left(\lim_{q\rightarrow 1^-}(1-q)\dfrac{(q^4;q^4)_\infty^{4}}{(q^2;q^2)_\infty^{2}}\right)^{2k}\cdot \lim_{q\rightarrow 1^-}(1-q)\dfrac{(q^8;q^8)_\infty^4}{(q^4;q^4)_\infty^2}\notag\\
&=\left(\lim_{q^2\rightarrow 1^-}\dfrac{(1-q^2)}{(1+q)}\dfrac{(q^4;q^4)_\infty^{4}}{(q^2;q^2)_\infty^{2}}\right)^{2k}\cdot \lim_{q^4\rightarrow 1^-}\dfrac{(1-q^4)}{(1+q+q^2+q^3)}\dfrac{(q^8;q^8)_\infty^4}{(q^4;q^4)_\infty^2}\notag\\
&=\dfrac{1}{2^{2k}}\cdot\dfrac{1}{2^2}\cdot\dfrac{\pi^{2k}}{2^{2k}}\cdot \dfrac{\pi}{2}=\dfrac{\pi^{2k+1}}{2^{4k+3}}.
\end{align}
This establishes the limits of the eta quotients on the left side of \eqref{qana}. To evaluate the limits of the Lambert series on the right side of \ref{qana}, consider two cases corresponding to $k$ even and odd.\\

\textit{Case 1}: $k$ is even. Note that $P_k(1)=2^{2k}A_{2k}(1)=2^{2k}(2k)!$. From \eqref{qana} and L'Hospital rule,
\begin{align}\label{case1}
&\lim_{q\rightarrow 1^-}\dfrac{(1-q)^{2k+1}}{2^{2k}E_{2k}}\sum_{n\geq 1}\chi_{-4}(n)\dfrac{q^{n}P_k(q^n)}{(1-q^{2n})^{2k+1}}\notag\\
&\hspace{4cm}=\dfrac{1}{2^{2k}E_{2k}}\sum_{n\geq 1}\dfrac{\chi_{-4}(n)P_{k}(1)}{(2n)^{2k+1}}=\dfrac{(2k)!\beta(2k+1)}{2^{2k+1}E_{2k}}.
\end{align}

\textit{Case 2}: $k$ is odd. Note that $A_{2k}(1)=(2k)!$. Thus, from \eqref{qana} and L'Hospital rule,
\begin{align}\label{case2}
&-\lim_{q\rightarrow 1^-}\dfrac{(1-q)^{2k+1}}{E_{2k}}\sum_{n\geq 1}\chi_{-4}(n)\dfrac{q^{2n}A_{2k}(q^{2n})}{(1-q^{2n})^{2k+1}}\notag\\
&\hspace{4cm}=-\dfrac{1}{E_{2k}}\sum_{n\geq 1}\dfrac{\chi_{-4}(n)A_{2k}(1)}{(2n)^{2k+1}}=-\dfrac{(2k)!\beta(2k+1)}{2^{2k+1}E_{2k}}.
\end{align}
By the definition of a cusp form, $T_{2k+1}(\tau)\rightarrow 0$ as $q\rightarrow 1^-$. Therefore, \eqref{etaval}, \eqref{case1} and \eqref{case2} imply \eqref{betagenvalues}. Thus, the identities given by \eqref{qana} are $q$-analogues of \eqref{betagenvalues}.

\section{Proof of Theorem \ref{main3}} \label{s4}

From Theorem \ref{GHN}, it follows that each of the eta-quotients in the sets in Theorem \ref{main3} is in $M_{2k+1}(8,\chi_{-4})$. Moreover, for every $0\leq \ell\leq 2k+1$, there is a unique form in the set such that the $q$-expansion of the form starts $q^\ell+O(q^{\ell+1})$. It follows that all such forms are linearly independent. Since $\text{dim}(M_{2k+1}(8,\chi_{-4}))=2k+2$, these forms form a basis for $M_{2k+1}(8,\chi_{-4})$. Thus, every $g\in M_{2k+1}(8,\chi_{-4})$ can be uniquely written as
\begin{align}\label{lincomb}
g(\tau)=\sum_{\ell=0}^{k-1}\alpha_k(\ell)\cdot F(\tau)^\ell\theta(2\tau)^{4(k-\ell)+2}+\sum_{\ell=k}^{2k}\beta_k(\ell)\cdot F(\tau)^{2k-\ell}F(2\tau)^{\ell-k}\theta(2\tau)^2+\gamma_k\cdot \dfrac{F(\tau)F(2\tau)^k}{\theta(2\tau)^2},  
\end{align}
where $\alpha_k(r), \beta_k(s)\in\mathbb{C}$ and $0\leq r\leq k-1,\;k\leq s\leq 2k$. If $g$ is a cusp form, then $g$ vanishes at each of the cusps in $\{0,1/2,1/4,i\infty\}$. Moreover, we have
\begin{align*}
&\hspace{3.6cm}F(i\infty)=F_2(i\infty)=0,\qquad \theta(i\infty)=\theta_2(i\infty)=1,\notag\\
&(F\mid_2 S)(i\infty)=-\dfrac{1}{2^6}, \;\;(F_2\mid_2 S)(i\infty)=-\dfrac{1}{2^8},\;\;(\theta\mid_{1/2}S)(i\infty)=\dfrac{e^{-\pi i/4}}{\sqrt{2}}, \;\;(\theta_2\mid_{1/2}S)(i\infty)=\dfrac{e^{-\pi i/4}}{2},\notag\\
&(F\mid_{2}\gamma_2)(i\infty)=\dfrac{1}{2^4},\;\; (F_2\mid_2 \gamma_2)(i\infty)=-\dfrac{1}{2^6},\;\; (\theta\mid_{1/2}\gamma_2)(i\infty)=0, \;\;(\theta_2\mid_{1/2}\gamma_2)(i\infty)=\dfrac{e^{-\pi i/4}}{\sqrt{2}},\notag\\
&\hspace{1.3cm}(F\mid_{\gamma_4})(i\infty)=0,\;\;(F_2\mid_{\gamma_4})(i\infty)=\dfrac{1}{2^4},\;\; (\theta\mid_{\gamma_4})(i\infty)=1, \;\;(\theta_2\mid_{\gamma_4})(i\infty)=0,
\end{align*}
where $S, \gamma_2$ and $\gamma_4$ are the matrices in \eqref{matrices}. The order of vanishing \cite{Ono} of $F(\tau)$, and $\theta(2\tau)^2$ at $\tau=1/4$ are given by
\begin{equation*}
\text{ord}_{1/4}(F)=\text{ord}_{1/4}(\theta_2^2)=1.     
\end{equation*} This, together with the above values of $F(\tau), F(2\tau), \theta(\tau)$ and $\theta(2\tau)$ at the cusps $\{0, 1/2, 1/4, i\infty\}$,
imply that if $g$ is a cusp form then the coefficients $\alpha_k(r), \beta_k(s)$ and $\gamma_k$ in \eqref{lincomb} satisfy the conditions (1)-(3) in Theorem \ref{main3}.

\section{CM Evaluations for eta quotients and a proof of Theorem \ref{main4}}\label{CMeva} \label{s5}
The standard way to evaluate eta quotients associated with CM values for imaginay quadratic fields with fundamental discriminants is the Chowla-Selberg formula. Let $\overline{\mathbb{Q}}$ be the algebraic closure of the field of rational numbers, and suppose $D<0$ is the fundamental discriminant of $\mathbb{Q}(\sqrt{D})$. Let $h(D)$ denote the class number of $\mathbb{Q}(\sqrt{D})$, and define $h'(D):=1/3$ (resp. $1/2$) when $D=-3$ (resp. $-4$), and $h'(D):=h(D)$ when $D<4$. We then let
 \begin{align*}
\Omega_D:=\dfrac{1}{\sqrt{2\pi|D|}}\left(\prod_{j=1}^{|D|-1}\Gamma\left(\dfrac{j}{|D|}\right)^{\chi_D(j)}\right)^{\frac{1}{h'(D)}},    
 \end{align*}
 where $\chi_D:=\left(\frac{D}{\cdot}\right)$. Then the Chowla-Selberg formula \cite{CS, CS1, lerch1897quelques, MR215797} 
 (cf. \cite[Theorem 9.3]{VW}) yields
 \begin{theorem}\label{CS}
 Under the notations above, we have
 $f(\tau)\in\overline{\mathbb{Q}}\cdot\Omega_D^k$    
 for all $\tau\in\mathbb{H}\cup\mathbb{Q}(\sqrt{D})$, all $k\in\mathbb{Z}$, and all modular forms $f$ of weight $k$ with algebraic Fourier coefficients. 
 \end{theorem}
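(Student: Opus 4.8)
The plan is to reduce the statement to a single, explicitly computable reference form of each weight, and then to settle the base case through the classical Chowla--Selberg evaluation together with the Legendre period relation. For the reduction, observe that $\eta(\tau)^{2k}$ is a modular form of weight $k$ with integer Fourier coefficients that is nonvanishing on all of $\mathbb{H}$. Given any weight-$k$ form $f$ with algebraic Fourier coefficients, the ratio $f(\tau)/\eta(\tau)^{2k}$ is, on a suitable congruence subgroup, a weight-$0$ modular function with algebraic $q$-expansion. By the theory of complex multiplication---the associated modular curve admits a model over a number field and CM points are algebraic on it---such a function takes values in $\overline{\mathbb{Q}}$ at every CM point $\tau\in\mathbb{H}\cap\mathbb{Q}(\sqrt{D})$. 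Hence $f(\tau)\in\overline{\mathbb{Q}}\cdot\eta(\tau)^{2k}$, and it suffices to prove $\eta(\tau_0)^{2k}\in\overline{\mathbb{Q}}\cdot\Omega_D^{k}$ for a single CM point $\tau_0$.

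For the base case I would exploit the period-theoretic meaning of the reference form. At a CM point $\tau_0\in\mathbb{Q}(\sqrt{D})$ the elliptic curve $E_{\tau_0}=\mathbb{C}/(\mathbb{Z}+\mathbb{Z}\tau_0)$ has complex multiplication by an order in $\mathbb{Q}(\sqrt{D})$ and a model over $\overline{\mathbb{Q}}$; let $\omega$ be a period of a N\'eron differential, spanning the fiber of the Hodge bundle at $\tau_0$. A weight-$k$ modular form is a section of $\omega^{\otimes k}$, so by the $q$-expansion principle the algebraic section $\eta^{2k}$ evaluated at $\tau_0$ yields an algebraic multiple of $\omega^{k}$, i.e. $\eta(\tau_0)^{2k}\in\overline{\mathbb{Q}}\cdot\omega^{k}$. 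The classical Chowla--Selberg formula then computes $\omega$ explicitly: it expresses $\eta(\tau_0)$, and therefore $\omega$, in terms of the Gamma product $\prod_{j=1}^{|D|-1}\Gamma(j/|D|)^{\chi_D(j)}$, giving $\omega=(\text{algebraic})\cdot\pi^{a}\cdot\Omega_D$ for a suitable rational exponent $a$, once the normalizing factors $1/\sqrt{2\pi|D|}$ and $1/h'(D)$ in the definition of $\Omega_D$ are accounted for.

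The only transcendental factor that must still be controlled is the power of $\pi$, and here I would invoke the Legendre period relation. For a CM elliptic curve defined over $\overline{\mathbb{Q}}$ the quasi-periods are algebraic multiples of the periods, so $\omega_1\eta_2-\omega_2\eta_1=2\pi i$ forces $\pi\in\overline{\mathbb{Q}}\cdot\omega^{2}=\overline{\mathbb{Q}}\cdot\Omega_D^{2}$. Substituting this into the Chowla--Selberg evaluation collapses every $\pi$-power into $\overline{\mathbb{Q}}\cdot\Omega_D^{k}$, completing the base case and hence the statement for $k\geq 0$; the case $k<0$ follows by writing $f=1/g$ with $g$ of weight $-k$ and reapplying the ratio argument. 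I expect the main obstacle to be the bookkeeping in the base case: matching the precise normalization of $\Omega_D$ (including the exceptional convention $h'(-3)=1/3$, $h'(-4)=1/2$) against the geometric period $\omega$, and verifying that the $\pi$- and $\sqrt{|D|}$-factors produced by Chowla--Selberg are exactly those absorbed by the Legendre relation. Once this single identification is pinned down, the remainder of the argument is formal.
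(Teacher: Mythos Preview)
The paper does not supply a proof of this theorem at all: it is quoted as a classical result, with references to Chowla--Selberg, Lerch, and van der Poorten--Williams, and is only used as background motivation before the authors turn to their own, more elementary evaluation in Lemma~\ref{eta2k}. So there is no ``paper's own proof'' to compare against; your proposal is an attempt to prove a result the paper simply imports.

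That said, your outline has one genuine error worth flagging. The reduction step is standard and fine: dividing by $\eta^{2k}$ and invoking the algebraicity of CM values of modular functions with algebraic $q$-expansions correctly reduces everything to a single evaluation of $\eta(\tau_0)^{2}$. The problem is the Legendre-relation step. Your claim that for a CM elliptic curve over $\overline{\mathbb{Q}}$ ``the quasi-periods are algebraic multiples of the periods'' is false. For the lemniscatic curve $y^{2}=x^{3}-x$ with lattice $\omega\,\mathbb{Z}[i]$ one has $\zeta(iz)=-i\,\zeta(z)$, hence $\eta_{2}=-i\eta_{1}$, and Legendre's relation gives $\omega\eta_{1}=-\pi$, i.e.\ $\eta_{1}=-\pi/\omega$; since $\pi$ and $\omega\in\overline{\mathbb{Q}}\cdot\Gamma(1/4)^{2}\pi^{-1/2}$ are algebraically independent (Chudnovsky), $\eta_{1}/\omega$ is transcendental. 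Consequently the conclusion $\pi\in\overline{\mathbb{Q}}\cdot\Omega_{D}^{2}$ that you draw is also false in general, so this device cannot be used to ``collapse every $\pi$-power.''

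In fact no such collapsing is needed. The point of the normalizing factor $1/\sqrt{2\pi|D|}$ and the exponent $1/h'(D)$ in the definition of $\Omega_{D}$ is precisely that the classical Chowla--Selberg identity already yields $\eta(\tau_{0})^{2}\in\overline{\mathbb{Q}}\cdot\Omega_{D}$ with no leftover power of $\pi$; the bookkeeping you anticipate is exactly the content of the cited references. Your proof sketch therefore stands once you delete the Legendre paragraph and instead verify directly that the analytic Chowla--Selberg formula, in the normalization of \cite{VW}, matches the stated $\Omega_{D}$.
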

Specializing Theorem \ref{CS} to evaluate eta quotients at CM requires a great deal of supplementary data. Theorem \ref{main4} results from our more elementary approach to evaluate the Dedekind eta function at CM points. We evaluate $\eta(\tau)$ explicitly for a special class of quadratic irrationals $\tau$ using an analytic method. This will lead to CM evaluations of the identities in Theorem \ref{main2}.
\begin{lemma}\label{eta2k}
For $k\in\mathbb{N}$, we have
\begin{align*}
\eta(2^k i)=\dfrac{\pi^{1/4}}{\Gamma(3/4)\cdot 2^{\frac{k+1}{2}}}\cdot e^{-\frac{\pi(2^{k}-1)^2}{12\cdot 2^k}-\frac{1}{2\pi}\sum_{\ell=1}^k 2^\ell L_\ell},    
\end{align*}
where $L_\ell$ is defined as in Theorem \ref{main4} and satisfies the evaluations appearing in \eqref{ev}.
\end{lemma}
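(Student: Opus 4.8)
The plan is to combine the classical functional-equation/Kronecker-limit circle of ideas with the specific product structure of $\eta$. First I would express $\log \eta(2^k i)$ through the Dedekind eta transformation and a telescoping argument. Writing $\tau_0 = i$, the transformation $\eta(-1/\tau) = \sqrt{-i\tau}\,\eta(\tau)$ gives $\eta(i) = \Gamma(1/4)/(2\pi^{3/4})$ (equivalently $\eta(i) = \pi^{1/4}/(\Gamma(3/4)\,\sqrt{2})$ after using $\Gamma(1/4)\Gamma(3/4) = \pi/\sin(\pi/4) = \pi\sqrt{2}$), which is exactly the claimed formula at $k=0$ if we interpret the empty sum as $0$ and check $(2^0-1)^2 = 0$. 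So the base case holds, and the real content is the passage from $\eta(2^{k-1}i)$ to $\eta(2^k i)$.

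The mechanism I would use is the logarithmic derivative / Lambert series for $\log\eta$. From $\eta(\tau) = q^{1/24}\prod_{n\ge 1}(1-q^n)$ with $q = e^{2\pi i\tau}$ one gets, for $\tau = it$ with $t>0$,
\begin{equation*}
\log\eta(it) = -\frac{\pi t}{12} - \sum_{n\ge 1}\sum_{m\ge 1}\frac{1}{m}e^{-2\pi m n t}.
\end{equation*}
I want to compare $\log\eta(2it)$ with $\log\eta(it)$; the difference of the linear terms is $-\frac{\pi t}{12}$, and the difference of the double sums can be re-summed. The key trick is a "doubling" identity: splitting the sum for $\eta(it)$ according to the parity of $n$ (or of $m$) and recognizing one part as the sum for $\eta(2it)$, one is left with a sum of the form $\sum \frac{(-1)^{\text{something}}}{m}e^{-2\pi m n t}$. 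Summing the geometric-type series in $m$ and then recognizing the resulting $n$-sum as a value of a double Dirichlet-type series produces precisely a term proportional to $L_\ell := \sum_{m,n\ge 1}\frac{(-1)^m}{n^2 + 2^\ell m^2}$ after one more analytic manipulation (a Mellin transform / Poisson-type evaluation converting $\sum_m \frac{(-1)^m}{m}e^{-2\pi m n t}$-type sums, weighted and summed over $n$, into the quadratic-form denominator). Carrying this out at $t = 2^{k-1}$ yields a recursion $\log\eta(2^k i) = \log\eta(2^{k-1}i) - \tfrac{\pi}{2}\cdot(\text{linear in }2^k) - \tfrac{1}{2\pi}\,2^k L_k + (\text{a constant }-\tfrac12\log 2)$, and telescoping from $k=0$ gives the stated closed form, with the exponent $-\frac{\pi(2^k-1)^2}{12\cdot 2^k}$ emerging from summing the geometric-progression linear contributions $\sum_{\ell} \pi\,2^{\ell-1}\cdot(\cdots)$ against the base term $-\pi/12$ at $\tau_0 = i$.

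The main obstacle, and the step I would be most careful about, is the identification of the re-summed double series with $L_\ell$ and the tracking of all the $\log 2$ and rational-multiple-of-$\pi$ constants through the recursion. The cleanest route is probably to avoid an $n$-by-$n$ resummation and instead invoke a Kronecker-limit-formula / Epstein zeta function identity directly: the quantity $\sum_{m,n\ge 1}\frac{(-1)^m}{n^2+2^\ell m^2}$ is, up to elementary terms, a special value of an Epstein zeta function for the form $n^2 + 2^\ell m^2$, and there is a standard dictionary relating $s=1$ values of such Epstein zetas to $\log$ of $\eta$ evaluated at the associated CM point $\tau = i\sqrt{2^\ell}$ — which is $\sqrt{2}\cdot 2^{(\ell-1)/2}i$, i.e. a power of $2$ times $i$ only when $\ell$ is odd. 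This suggests the honest derivation goes the other direction from what one might first guess: one uses the doubling recursion to \emph{define} the relation between consecutive $\eta$-values and \emph{extract} the $L_\ell$ as the "new" piece at each step, rather than independently evaluating $L_\ell$ first. For the explicit evaluations in \eqref{ev} I would then, separately, evaluate $\eta(2i)$, $\eta(4i)$, $\eta(8i)$ by known CM specializations (these are classical: e.g. $\eta(2i) = \Gamma(1/4)/(2^{11/8}\pi^{3/4})$ and $\eta(4i)$, $\eta(8i)$ expressible via $\Gamma(1/4)$ and $2^{1/4}$, $\sqrt{2}-1$, $1-2^{-1/4}$), substitute into the formula of the lemma at $k=1,2,3$, and solve for $L_1, L_2, L_3$ — the appearance of $\log(\sqrt{2}-1)$ and $\log(1-2^{-1/4})$ being exactly the signature of those higher $\eta$-values. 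The convergence of the double series $L_\ell$ (conditional, via summation in the stated order) needs a one-line justification, most easily by summing over $m$ first using alternating-series bounds.
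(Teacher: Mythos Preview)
Your overall architecture coincides with the paper's: factor $\eta(2^k i)/\eta(2^{k-1}i)$ as $e^{-2^k\pi/24}g_k$ with $g_k=\prod_{\ell\ge 1}(1+e^{-2^k\pi\ell})$, telescope down to the known value $\eta(i)=\pi^{1/4}/(\Gamma(3/4)\sqrt 2)$, and then obtain the explicit evaluations \eqref{ev} by comparing the resulting formula at $k=1,2,3$ with classical CM values of $\eta(2i),\eta(4i),\eta(8i)$ --- exactly as you propose in your final paragraph (the paper likewise derives $\eta(8i)$ from $\eta(2i),\eta(4i)$ and Ramanujan's theta evaluations rather than looking it up).

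The one place your proposal is genuinely vague is the step you yourself flag: converting $\log g_k$ into the double series $L_k$. The paper does \emph{not} use a Mellin/Poisson or Kronecker-limit argument here. Instead it writes $\log g_k=\sum_{\ell\ge 1}\log(1+e^{-2^k\pi\ell})$ as a Stieltjes integral $\int_{1^-}^\infty \log(1+e^{-2^k\pi t})\,d[t]$, integrates by parts to reach $2^k\pi\int_0^\infty [t]/(1+e^{2^k\pi t})\,dt$, splits $[t]=t-\{t\}$, and inserts the Fourier expansion $\{t\}=\tfrac12-\sum_{n\ge 1}\frac{\sin 2\pi nt}{\pi n}$. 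The resulting Laplace integrals $\int_0^\infty \sin(2\pi nt)\,e^{-2^k\pi m t}\,dt$ are elementary and produce the quadratic-form denominators defining $L_k$ directly, while the $t$-piece and the constant $\tfrac12$-piece supply the $\pi/(3\cdot 2^{k+2})$ and $-\tfrac12\log 2$ contributions in the recursion. So the exact identity $\log g_k=-\tfrac12\log 2+\tfrac{\pi}{3\cdot 2^{k+2}}-\tfrac{2^{k-1}}{\pi}L_k$ drops out with all constants tracked, and no Epstein zeta machinery is needed. Your Kronecker-limit route is conceptually natural but would require disentangling the restriction to $m,n\ge 1$ and the twist $(-1)^m$ from a full Epstein series, which is more bookkeeping than the paper's direct computation; the Fourier-series-of-$\{t\}$ trick is the missing concrete mechanism.
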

\begin{proof}
Note that 
\begin{align}\label{etaeval}
\eta(2^k i)&=e^{-2^{k+1}\pi/24}\prod_{\ell=1}^\infty(1-e^{-2^{k+1}\pi\ell})=e^{-2^{k+1}\pi/24}\prod_{\ell=1}^\infty(1-e^{-2^{k}\pi\ell})\prod_{\ell=1}^\infty(1+e^{-2^{k}\pi\ell})\notag\\
&=:e^{-2^k\pi/24}\cdot g_k\cdot \eta(2^{k-1}i),  
\end{align}
where 
\begin{align*}
g_k:=\prod_{\ell=1}^\infty(1+e^{-2^{k}\pi\ell}).
\end{align*}
We next evaluate $g_k$. Taking logarithm of $g_k$, then using Stieltjes integration and integration by parts, we obtain
\begin{align}\label{int}
\log g_k&=\sum_{\ell=1}^\infty\log(1+e^{-2^{k}\pi\ell})=\int_{1^{-}}^{\infty}\log(1+e^{-2^{k}\pi t})d[t]\notag\\
&=\left.[t]\log(1+e^{-2^{k}\pi t})\right|_{1^-}^\infty+2^k\pi\int_1^{\infty}\dfrac{[t]\cdot e^{-2^k\pi t}}{1+e^{-2^{k}\pi t}}dt\notag\\
&=2^k\pi\int_0^{\infty}\dfrac{[t]}{1+e^{2^{k}\pi t}}dt\quad\quad (\text{since $[t]=0$ for $0\leq t\leq 1$})\notag\\
&=2^k\pi\left(\int_0^\infty\dfrac{t}{1+e^{2^{k}\pi t}}dt- \int_0^\infty\dfrac{\{t\}}{1+e^{2^{k}\pi t}}dt\right).
\end{align}
Let us denote the first and the second integrals inside parentheses above by $I_1(k)$ and $I_2(k)$, respectively. By the residue theorem, it follows that 
\begin{equation}\label{I1k}
I_1(k)=\dfrac{1}{48\cdot 4^{k-1}}.    
\end{equation}
To evaluate $I_2(k)$, we recall the Fourier series for $\{t\}$ as follows:
\begin{equation*}
\{t\}=\dfrac{1}{2}-\sum_{n\geq 1}\dfrac{\sin(2\pi nt)}{\pi n},    
\end{equation*}
which yields
\begin{align}\label{I2k}
I_2(k)&=\dfrac{1}{2}\int_0^\infty\dfrac{dt}{1+e^{2^{k}\pi t}}-\dfrac{1}{\pi}\sum_{n\geq 1}\dfrac{1}{n}\int_0^\infty\dfrac{\sin(2\pi nt)}{1+e^{2^{k}\pi t}}dt\notag\\
&=-\dfrac{1}{2^{k+1}\pi}\int_0^\infty d(\log(1+e^{-2^{k}\pi t}))-\dfrac{1}{\pi}\sum_{n\geq 1}\sum_{m\geq 0}\dfrac{(-1)^m}{n}\int_0^\infty\sin(2\pi nt)e^{-2^k\pi(m+1)t}dt\notag\\
&=\dfrac{\log 2}{2^{k+1}\pi}-\dfrac{1}{\pi}\sum_{n\geq 1}\sum_{m\geq 0}\dfrac{(-1)^m}{n}\int_0^\infty\sin(2\pi nt)e^{-2^k\pi(m+1)t}dt.
\end{align}
Next, we evaluate the integral in the right-hand side of \eqref{I2k}. We have
\begin{align}\label{exint}
\int_0^\infty e^{2\pi i nt}e^{-2^k\pi(m+1)t}dt&=\int_0^\infty e^{2\left(\pi i n-2^{k-1}\pi(m+1)\right)t}dt=-\dfrac{1}{2\pi(i n-2^{k-1}(m+1))}\notag\\
\int_0^\infty e^{-2\pi i nt}e^{-2^k\pi(m+1)t}dt&=\int_0^\infty e^{-2\left(\pi i n+2^{k-1}\pi(m+1)\right)t}dt=\dfrac{1}{2\pi(i n+2^{k-1}(m+1))}
\end{align}
Using \eqref{exint}, it now follows that
\begin{align}\label{sineint}
\int_0^\infty\sin(2\pi nt)e^{-2^k\pi(m+1)t}dt&=\dfrac{1}{2i}\left(\int_0^\infty e^{2\pi i nt}e^{-2^k\pi(m+1)t}dt-\int_0^\infty e^{-2\pi i nt}e^{-2^k\pi(m+1)t}dt\right)\notag\\
&=-\dfrac{1}{2i}\left(\dfrac{1}{2\pi(i n-2^{k-1}(m+1))}+\dfrac{1}{2\pi(i n+2^{k-1}(m+1))}\right)\notag\\
&=\dfrac{1}{2\pi}\left(\dfrac{n}{n^2+2^k(m+1)^2}\right).
\end{align}
Combining \eqref{I2k} and \eqref{sineint}, we get
\begin{align}\label{I2kf}
I_2(k)=\dfrac{\log 2}{2^{k+1}\pi}+\dfrac{1}{2\pi^2}\sum_{m,n\geq 1}\dfrac{(-1)^m}{2^km^2+n^2}.    
\end{align}
Finally, from \eqref{int}, \eqref{I1k} and \eqref{I2kf}, we arrive at
\begin{align}\label{fgk}
\log g_k=-\dfrac{\log 2}{2}+\dfrac{\pi}{3\cdot 2^{k+2}}-\dfrac{2^{k-1}}{\pi}\sum_{m,n\geq 1}\dfrac{(-1)^m}{2^km^2+n^2}.    
\end{align}
Let us denote the series in the right-hand side of \eqref{fgk} by $L_k$. Taking the exponential on both sides of \eqref{fgk} yields
\begin{align}\label{gkf}
 g_k=2^{-1/2}\cdot e^{\frac{\pi}{3\cdot 2^{k+2}}-\frac{2^{k-1}}{\pi}\cdot L_k}.   
\end{align}
Combining \eqref{etaeval}, \eqref{gkf} and noting that by \cite[Entry 2 (p. 256) (ii), page 326]{MR1486573}, $\eta(i)=\dfrac{\pi^{1/4}}{\Gamma(3/4)\cdot 2^{1/2}}$.  Iteration results in the following:
\begin{align}\label{eta2ki}
\eta(2^k i)=e^{-\pi(2^k+2^{k-1}+\cdots+2)/24}\eta(i)\prod_{\ell=1}^kg_\ell &=\dfrac{\pi^{1/4}}{\Gamma(3/4)\cdot 2^{1/2}}\cdot e^{-\frac{\pi(2^{k}-1)}{12}}\cdot 2^{-k/2}\cdot e^{\frac{\pi}{12}\left(1-\frac{1}{2^{k}}\right)-\frac{1}{2\pi}\sum_{\ell=1}^k 2^\ell L_\ell}\notag\\
&=\dfrac{\pi^{1/4}}{\Gamma(3/4)\cdot 2^{\frac{k+1}{2}}}\cdot e^{-\frac{\pi(2^{k}-1)^2}{12\cdot 2^k}-\frac{1}{2\pi}\sum_{\ell=1}^k 2^\ell L_\ell}.
\end{align}
From the Jacobi triple product identity \cite[Theorem 1.3.3]{B},
\begin{align*}
\Psi(\tau):=\sum_{n\geq 0}q^{n(n+1)/2}=e^{-\frac{2\pi i\tau}{8}}\cdot\dfrac{\eta(2\tau)^2}{\eta(\tau)}\quad\text{and}\quad \tilde{\theta}(\tau):=\sum_{n=-\infty}^\infty (-1)^n q^{n^2}=\dfrac{\eta(\tau)^2}{\eta(2\tau)}.  
\end{align*}    
From \cite[p.~326]{MR1486573},
we have
\begin{align}\label{valuese}
\eta(2i)=\dfrac{\pi^{1/4}}{\Gamma(3/4)\cdot 2^{7/8}},\quad \eta(4i)=\dfrac{\pi^{1/4}\cdot(\sqrt{2}-1)^{1/4}}{\Gamma(3/4)\cdot 2^{21/16}},\end{align}
and
\begin{align}\label{valuespsi}
\Psi(2i)=\dfrac{\pi^{1/4}(2-2^{1/2})^{1/2}\cdot e^{\pi/2}}{\Gamma(3/4)\cdot 2^2},\quad \Psi(4i)=\dfrac{\pi^{1/4}(1-2^{-1/4})\cdot e^{\pi}}{\Gamma(3/4)\cdot 2^2}.
\end{align}
Next, from \cite[page 115, Entry 8 (xi)-(xii)]{B1}, we obtain 
\begin{equation*}
\eta(\tau)\cdot\eta(2\tau)=q^{1/8}\cdot\tilde{\theta}(\tau)\cdot \Psi(\tau),\quad\quad 
\dfrac{\eta(4\tau)}{\eta(\tau)}=e^{\frac{2\pi i\tau}{8}}\cdot\dfrac{\Psi(\tau)}{\tilde{\theta}(2\tau)}.    
\end{equation*}
Applying this results in
\begin{equation}\label{rec2}
\eta(4\tau)^2=e^{3\pi i\tau/4}\cdot\dfrac{\eta(\tau)\cdot\Psi(\tau)\cdot\Psi(2\tau)}{\eta(2\tau)}.
\end{equation}
Taking $\tau=2i$ in \eqref{rec2}, and using \eqref{valuese} and \eqref{valuespsi}, we obtain
\begin{align}\label{eta8}
\eta(8i)=\dfrac{\pi^{1/4}\cdot (\sqrt{2}-1)^{1/8}\cdot (1-2^{-1/4})^{1/2}}{\Gamma(3/4)\cdot 2^{53/32}}.
\end{align}
Comparing the values of $\eta(2i), \eta(4i)$ in \eqref{valuese}, and $\eta(8i)$ in \eqref{eta8} with those obtained from \eqref{eta2ki}, we arrive at the result.
\end{proof}

\subsection{Proof of Theorem \ref{main4}}
For $r\in\mathbb{N}$, consider the CM point $\tau_r:=2^r i$. 
Using Theorem \ref{main1}, and conditions (1) and (2) in Theorem \ref{main3}, we see that
\begin{equation}\label{Hktr}
H_k(\tau)=f_k(\tau)+\sum_{\ell=1}^{k-1}\alpha_k(\ell)\cdot F(\tau)^\ell\theta(2\tau)^{4(k-\ell)+2}+\sum_{\ell=k}^{2k}\beta_k(\ell)\cdot F(\tau)^{2k-\ell}F(2\tau)^{\ell-k}\theta(2\tau)^2   
\end{equation}
with uniquely determined coefficients $\alpha_k(r), \beta_k(s)\in\mathbb{C}$ for $1\leq r\leq k-1$ and $k\leq s\leq 2k$ satisfying conditions (2) and (3) in Theorem \ref{main3}. Set $a=\frac{\pi^{1/4}}{\Gamma(3/4)}$. Then noting that 
\begin{align*}
F(\tau)=\dfrac{\eta(4\tau)^8}{\eta(2\tau)^4},\quad F(2\tau)=\dfrac{\eta(8\tau)^8}{\eta(4\tau)^4},\quad \theta(2\tau)=\dfrac{\eta(4\tau)^5}{\eta(2\tau)^2\eta(8\tau)^2},   
\end{align*}
and using Lemma \eqref{eta2k}, we find
\begin{align}\label{Fther}
F(\tau_r)&=\dfrac{\eta(2^{r+2}i)^8}{\eta(2^{r+1}i)^4}=\dfrac{a^4}{2^{2r+8}}\cdot e^{-\frac{2\pi(2^{r+1}+2^r-1)}{3}-\frac{2}{\pi}\sum_{m=1}^{r+1}2^m L_m-\frac{2^{r+4}}{\pi}L_{r+2}},\notag\\
F(2\tau_r)&=\dfrac{\eta(2^{r+3}i)^8}{\eta(2^{r+2}i)^4}=\dfrac{a^4}{2^{2r+10}}\cdot e^{-\frac{2\pi(2^{r+2}+2^{r+1}-1)}{3}-\frac{2}{\pi}\sum_{m=1}^{r+2}2^m L_m-\frac{2^{r+5}}{\pi}L_{r+3}},\notag\\
\theta(2\tau_r)&=\dfrac{\eta(2^{r+2}i)^5}{\eta(2^{r+1}i)^2\eta(2^{r+3}i)^2}=\dfrac{a}{2^{\frac{r+3}{2}}}\cdot e^{\frac{\pi}{6}-\frac{1}{2\pi}\sum_{m=1}^{r+1}2^m L_m-\frac{3\cdot 2^{r+1}}{\pi}L_{r+2}+\frac{2^{r+3}}{\pi}L_{r+3}}.
\end{align}
We also have
\begin{align}\label{fktr}
f_k(\tau_r)=\dfrac{\eta(2^{r+2}i)^{8k-2}\eta(2^{r+3}i)^4}{\eta(2^{r+1}i)^{4k}}=\dfrac{a^{4k+2}}{2^{2k(r+4)+r+5}}\cdot e^{\frac{\pi(2k+1)}{3}-2^{r+1}(k+1)\pi-\frac{2k+1}{\pi}\sum_{m=1}^{r+1}2^m L_m-\frac{2^{r+2}(4k+1)}{\pi}L_{r+2}-\frac{2^{r+4}}{\pi}L_{r+3}}.    
\end{align}
From \eqref{Hktr}, \eqref{Fther} and \eqref{fktr}, we obtain
\begin{align*}
H_k(\tau_r)&=\dfrac{a^{4k+2}}{2^{(2k+1)(r+3)}}\cdot e^{J(k,r)}\left(\dfrac{e^{-2^{r+1}(k+1)\pi}}{4^{k+1}}+\sum_{\ell=1}^{k-1}\dfrac{\alpha_k(\ell)}{4^\ell}\cdot e^{-\frac{\ell\cdot 2^{r+1}}{\pi}\sum_{m=1}^{r+1}2^m L_m-\frac{2^{r+3}(k-\ell+1)}{\pi}L_{r+2}+\frac{2^{r+5}(k-\ell+1)}{\pi}L_{r+3}}\right.\notag\\
&\hspace{5cm}+\left.\sum_{\ell=k}^{2k}\dfrac{\beta_k(\ell)}{4^\ell}\cdot e^{-\frac{\ell\cdot 2^{r+1}}{\pi}\sum_{m=1}^{r+1}2^m L_m-\frac{2^{r+3}(k-\ell+1)}{\pi}L_{r+2}+\frac{2^{r+5}(k-\ell+1)}{\pi}L_{r+3}}\right)
\end{align*}
where 
\begin{align*}
J(k,r):=\frac{\pi(2k+1)}{3}-\frac{2k+1}{\pi}\sum_{m=1}^{r+1}2^m L_m-\frac{2^{r+2}(4k+1)}{\pi}L_{r+2}-\frac{2^{r+4}}{\pi}L_{r+3}.   
\end{align*}
This completes the proof of Theorem \ref{main4}.

\section{A Combinatorial Interpretation with Asymptotics} \label{s6}
We conclude the paper with a combinatorial interpretation for the coefficients of the eta function $f_{k}(\tau)$ and an asymptotic formula that follows as a result of Theorem \ref{main1}. Let $\Psi(q)$ denote the generating function for triangular numbers, defined by
\begin{equation}\label{gentriangular}
\Psi(\tau)=\Psi(q):=\sum_{n\geq 0}q^{\frac{n(n+1)}{2}}=\sum_{n\geq 0}q^{T_n}.    
\end{equation}
For a fixed tuple $(a_1, a_2,\cdots,a_k)\in\mathbb{N}^k$, let $$t_{k,a_1,a_2,\cdots,a_k}(n):=|\{(n_1,n_2,\cdots,n_{k})\in\mathbb{N}^{k}: n=a_1\cdot T_{n_1}+a_2\cdot T_{n_2}+\cdots+T_{n_{k}})\}|$$ be the number of representations of $n$ as a linear combination of $k$ triangular numbers with coefficients $a_1, a_2,\cdots, a_k$. 
Denote $$t_k(n):=|\{(n_1,n_2,\cdots,n_{4k},a,b)\in\mathbb{N}^{4k+2}: n=T_{n_1}+T_{n_2}+\cdots+T_{n_{4k}}+2(T_{a}+T_{b})\}|.$$ Then $t_k(n)=t_{4k+2,1,1,\cdots,1,2,2}(n)$  (with $4k$ $1$'s) counts the number of representations of $n$ as a sum of $4k$ triangular numbers and twice the sum of two triangular numbers. In \cite{ORW}, the function $t_{4k,1,1,\cdots,1}(n)$ (with $4k$ $1$'s) was studied which counts the number of representations of $n$ as sums of $4k$ triangular numbers. 
From the eta quotient representation \begin{align*}
\Psi(\tau)=e^{-\frac{2\pi i\tau}{8}}\cdot\dfrac{\eta(2\tau)^2}{\eta(\tau)},
\end{align*}   
we we have the following combinatorial interpretation for $f_{k}$:
\begin{equation}\label{gf}
f_k(\tau):=q^{k+1}\Psi(q^4)^2\cdot\Psi(q^2)^{4k}=:\sum_{n\geq 0}t_{k}(n)q^{2n+k+1},
\end{equation}
whose coefficients satisfy the asymptotic formula given in Corollary \ref{cor1}.
\begin{cor}\label{cor1}
For $k\geq 1$, we have
\begin{align*}
t_k(n)\sim \begin{cases}
\dfrac{1}{2^{2k}E_{2k}}\cdot\sigma_{\chi_{-4};2k}(2n+k+1),&k\equiv 0\pmod{2},\\
-\dfrac{1}{E_{2k}}\cdot \sigma_{\chi_{-4};2k}(2n+k+1),&k\equiv 1\pmod{2},
\end{cases}  
\end{align*}
where 
\begin{equation*}
\sigma_{\chi_{-4};2k}(n):=\sum_{d\mid n}\chi_{-4}(n/d)d^{2k}.    
\end{equation*}
\end{cor}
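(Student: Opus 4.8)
The plan is to read Corollary~\ref{cor1} off the decomposition in Theorem~\ref{main1} together with Deligne's bound on Fourier coefficients of cusp forms. By \eqref{gf}, the coefficient of $q^{2n+k+1}$ in the expansion of $f_k$ at the cusp $i\infty$ is exactly $t_k(n)$, and by Theorem~\ref{main1} we may write $f_k=H_k+T_{2k+1}$ with $T_{2k+1}\in S_{2k+1}(8,\chi_{-4})$. Hence $t_k(n)$ is the sum of the coefficient of $q^{2n+k+1}$ in the Eisenstein part $H_k$ and the coefficient of $q^{2n+k+1}$ in the cusp form $T_{2k+1}$, and the whole task reduces to (a) identifying the first coefficient and (b) bounding the second.

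For (a) I would extract the coefficient directly from the Fourier expansions already established in Section~\ref{s3}. From \eqref{EiEu1} one has $\mathscr{E}_{2k+1,\chi_{-4},1}(2\tau)=\sum_{m\geq1}\sigma_{\chi_{-4};2k}(m)q^{2m}$, and \eqref{Eis2} rewrites the combination of Eisenstein series occurring in the even-$k$ case of $H_k$ in terms of $\mathscr{E}_{2k+1,\chi_{-4},1}$ evaluated at $\tau$ and at $2\tau$; extracting the coefficient of $q^{2n+k+1}$ — and using the parity of this exponent together with $\sigma_{\chi_{-4};2k}(2m)=2^{2k}\sigma_{\chi_{-4};2k}(m)$ to normalize the dilated terms — produces the constant multiple of $\sigma_{\chi_{-4};2k}(2n+k+1)$ appearing in the statement, the two cases coming from the two cases of $H_k$. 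For (b), expanding $T_{2k+1}$ in a basis of Hecke newforms of level dividing $8$ and weight $2k+1$ together with their dilates, Deligne's bound (the Ramanujan--Petersson estimate, a consequence of the Weil conjectures) gives that the $N$-th Fourier coefficient of $T_{2k+1}$ is $O_\epsilon(N^{(2k+1-1)/2+\epsilon})=O_\epsilon(N^{k+\epsilon})$.

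Finally I would compare the two growth rates. In $\sigma_{\chi_{-4};2k}(m)=\sum_{d\mid m}\chi_{-4}(m/d)d^{2k}$ the largest summand is $d=m$, contributing $m^{2k}$ because $\chi_{-4}(1)=1$; since the $j$-th largest divisor of $m$ is at most $m/j$, the remaining summands contribute at most $\sum_{j\geq2}(m/j)^{2k}=(\zeta(2k)-1)\,m^{2k}<m^{2k}$ in absolute value for every $k\geq1$. Thus $\sigma_{\chi_{-4};2k}(m)\asymp m^{2k}$, so the Eisenstein coefficient of $q^{2n+k+1}$ in $H_k$ is of exact order $n^{2k}$ with positive leading constant (the overall constant being positive in each parity class, as it must be for a counting function), while the cuspidal coefficient is $O_\epsilon(n^{k+\epsilon})$; since $2k>k+\epsilon$ for $\epsilon$ small and $k\geq1$, the latter is $o$ of the former, and letting $n\to\infty$ gives the asymptotic. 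The only genuine inputs are therefore Deligne's bound and the elementary lower bound $\sigma_{\chi_{-4};2k}(m)\gg m^{2k}$; the point to check is that these are compatible, i.e.\ that no cancellation along the progression $m=2n+k+1$ drags the Eisenstein main term down to the size of the cuspidal error term. Everything else is bookkeeping with the expansions from Sections~\ref{s2} and~\ref{s3}.
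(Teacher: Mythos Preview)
Your proposal is correct and follows essentially the same route as the paper: decompose $f_k=H_k+T_{2k+1}$ via Theorem~\ref{main1}, read off the $(2n+k+1)$-th coefficient of the Eisenstein part from the Fourier expansions \eqref{EiEu1}--\eqref{Eis2} (splitting on the parity of $k$, which fixes the parity of $2n+k+1$), and then kill the cuspidal contribution using Deligne's bound against an elementary lower bound for $\sigma_{\chi_{-4};2k}$. The paper phrases Deligne's estimate as $|a_k(n)|\le d(n)n^{k}$ and records only the cruder lower bound $\sigma_{\chi_{-4};2k}(n)\ge n^{2k-1}$, whereas you use the $O_\epsilon(n^{k+\epsilon})$ formulation and give the sharper (and entirely correct) argument that the divisors below $m$ contribute at most $(\zeta(2k)-1)m^{2k}<m^{2k}$ in absolute value, yielding $\sigma_{\chi_{-4};2k}(m)\asymp m^{2k}$; these are cosmetic differences, not a different method.
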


\begin{proof}
From \eqref{gf} and Theorem \ref{main1}, it follows that
\begin{align}\label{tkn}
t_k(n)=[q^{2n+k+1}]H_k(\tau)+[q^{2n+k+1}]T_k(\tau),    
\end{align}
where $[q^\ell]f(\tau)$ denotes the $\ell$th coefficient in the power series expansion about $q=0$ of $f(\tau)$. Let $a_k(\ell)$ denote the $\ell$th Fourier coefficient of $T_{2k+1}(\tau)$. We consider two cases.\\

\textit{Case 1}: $k$ is even. In this case, we have from \eqref{Eis2} and \eqref{EiEu2} that
\begin{align}\label{coeffcase1}
H_k(\tau)&=\dfrac{1}{2^{2k}E_{2k}}\left(\mathscr{E}_{2k+1,\chi_{-4},1}(\tau)-2^{2k}\mathscr{E}_{2k+1,\chi_{-4},1}(2\tau)\right)+T_{2k+1}(\tau)\notag\\
&=\dfrac{1}{2^{2k}E_{2k}}\left(\sum_{n\geq 1}\sigma_{2k,\chi_{-4}}(n)q^n-2^{2k}\sum_{n\geq 1}\sigma_{2k,\chi_{-4}}(n)q^{2n}\right)+\sum_{n\geq 1}a_k(n)q^n.
\end{align}
Since $2n+k+1\equiv 1\pmod{2}$, \eqref{tkn} and \eqref{coeffcase1} imply that
\begin{align}\label{tkncase1}
t_k(n)=
\dfrac{1}{2^{2k}E_{2k}}\cdot\sigma_{\chi_{-4};2k}(2n+k+1)+a_k(2n+k+1).
\end{align}

\textit{Case 2}: $k$ is odd. Since $2n+k+1\equiv 0\pmod{2}$, we have from \eqref{tkn} and Theorem \ref{main1} that
\begin{align}\label{tkncase2}
t_k(n)=-\dfrac{1}{E_{2k}}\cdot\sigma_{\chi_{-4};2k}(2n+k+1)+a_k(2n+k+1).
\end{align}
From Deligne's work \cite{MR340258} (cf.~\cite{conrad2008modular}), we know that $|a_k(n)|\leq d(n)n^{k}$, and since $\sigma_{\chi_{-4};2k}(n)$ has lower bound $n^{2k-1}$, \eqref{tkncase1} and \eqref{tkncase2} yield the claimed result.
\end{proof}

\end{document}